\newtheorem{theorem}{Theorem}[section]
\newtheorem*{mt1}{Theorem \ref{MFPC}}
\newtheorem*{mt2}{Theorem \ref{QHI}}
\newtheorem*{mt3}{Theorem \ref{Generating}}
\newtheorem{lemma}[theorem]{Lemma}
\newtheorem{proposition}[theorem]{Proposition}
\newtheorem{corollary}[theorem]{Corollary}
\theoremstyle{remark}  
\newtheorem{remark}[theorem]{Remark}
\newtheorem{result}{Result}
\newtheorem{example}[theorem]{Example}
\newtheorem{definition}[theorem]{Definition}
\numberwithin{equation}{section}
\DeclareMathOperator{\ps}{ps}
\title{Quadratic Pseudostable Hodge Integrals and Mumford's Relation}
\author{Renzo Cavalieri}
\address[R. Cavalieri]{Colorado State University\\
Fort Collins, Colorado 80523-1874\\ USA}
\email{renzo@colostate.edu}
\author{Matthew M. Williams}
\address[M. Williams]{Colorado State University\\
Fort Collins, Colorado 80523-1874\\ USA}
\email{matthewmwilliamsmath@gmail.com}
\begin{document}

\begin{abstract} This paper studies the relationship between quadratic Hodge classes on moduli spaces of pseudostable and stable curves given by the contraction morphism $\mathcal{T}.$ While Mumford's relation does not hold in the pseudostable case, we show that one can express the (pullback via $\mathcal{T}$ of the) Chern classes of $\mathbb{E}\oplus \mathbb{E}^\vee$ solely in terms of descendants and strata classes. We organize the combinatorial structure of the pullback of products of two pseudostable $\lambda$ classes and obtain an explicit comparison of arbitrary pseudostable and stable quadratic Hodge integrals.
\end{abstract}

\maketitle


\section{Introduction}
\subsection{Statement of results}

We study the tautological intersection theory of moduli spaces of pseudostable curves. We focus in particular on {\it quadratic Hodge integrals}, i.e., intersection numbers consisting of two lambda classes plus an arbitrary number of psi classes (descendant insertions). Our results show a remarkably elegant combinatorial relationship between pseudostable quadratic Hodge integrals and their stable counterparts. We briefly introduce the notation necessary to state the results here, and defer to Section \ref{sec:Background} for a more comprehensive presentation of the  background.

We denote by $\mathcal{T}$ the contraction morphism
\begin{align}
        \mathcal{T}: \overline{\mathcal{M}}_{g, n} &\rightarrow \overline{\mathcal{M}}^{\ps}_{g, n},
\end{align}
and by $\mathcal{G}^k$  the gluing morphism 
\begin{align}
        \mathcal{G}^k: \overline{\mathcal{M}}_{g-k, n+k} \times \overline{\mathcal{M}}^{\times k}_{1, 1} &\rightarrow \overline{\mathcal{M}}_{g, n}
\end{align}
whose image is the stratum parameterizing curves with $k$ elliptic tails.
Finally, we introduce the {\it psi classes at half-edges} 
\begin{equation}
 \psi_{\star_i} := \mathcal{G}^k_\ast p_0^\ast(\psi_{n+i})   \text{ and } \psi_{\bullet_i} := \mathcal{G}^k_\ast p_i^\ast(\psi_1),
\end{equation}
where $p_i$ is projection onto the $(i+1)$-th factor of the domain of $\mathcal{G}^k$ (in other words, we count the factors starting from $0$ to distinguish the factor corresponding to the ``main'' component of the curve from those for the elliptic tails).

Our first result is the pseudostable version of Mumford's relation, which expresses the total Chern class of $\mathbb{E}\oplus \mathbb{E}^\vee$ in terms of boundary and descendant classes.

\begin{mt1} 
For all pseudostable indices $(g, n)$, we have $$\mathcal{T}^*\left((1 + \lambda_1 + \dots + \lambda_g)(1 - \lambda_1 + \dots + (-1)^g\lambda_g)\right) = \sum_{i=0}^g\frac{1}{i!}\mathcal{G}_*^i\left(\prod_{j=1}^i(\psi_{\star_j}-\psi_{\bullet_j})\right).$$
\end{mt1}

By looking at various homogeneous terms in the expression above, Theorem \ref{MFPC} gives several relations between quadratic polynomials in lambda classes and strata classes decorated solely by descendants ($\psi$ classes). 
The combinatorics for the pull-back of arbitrary quadratic pseudostable Hodge cycles can be organized in a similar fashion. While the cycle theoretic expressions remain rather complicated, one obtains a significant amount of simplification when multiplying by descendant classes and integrating. This allows for an explicit and simple
comparison
result between pseudostable and stable quadratic Hodge integrals.

\begin{mt2}\label{ResultB1}
    For all $i, j = 0, \dots, g$ and polynomial $F \in \mathbb{C}[a_1, \dots, a_n]$, $$\int_{\overline{\mathcal{M}}^{\ps}_{g, n}} \lambda_{i}\lambda_{j}F(\psi_1, \dots, \psi_n) = \sum_{k=0}^{\min\{i, j\}} \frac{1}{24^kk!}\int_{\overline{\mathcal{M}}_{g-k, n+k}} \lambda_{i-k}\lambda_{j-k}F(\psi_1, \dots, \psi_n).$$
\end{mt2}

Theorem \ref{QHI} exhibits a structural relationship between the set of all stable and pseudostable quadratic Hodge integrals. Such relationship becomes transparent when formulated in terms of appropriate generating functions, which are related by a multiplicative factor---a simple exponential in the genus parameter.

\begin{mt3}
    Let $x,y,z,t_m$  be formal variables. Define
    $$ 
    \mathcal{F}^{ps}(x, y, z, \{t_m\}):=\sum_{i, j, g, n = 0}^\infty x^iy^jz^g \int_{\overline{\mathcal{M}}^{\ps}_{g, n}} \frac{\lambda_{g-i}\lambda_{g-j}}{\prod_{m=1}^n (1 - t_m\psi_m)},
    $$
     and define $\mathcal{F}(x, y, z, \{t_m\})$  similarly for stable Hodge integrals. The two generating functions are related by: 
    $$
    \mathcal{F}^{ps}(x, y, z, \{t_m\}) = e^{z/24}\mathcal{F}(x, y, z, \{t_m\}).
    $$
\end{mt3} 

\subsection{Context and motivation}

The moduli space of genus $g$ curves $\mathcal{M}_g$ is a fundamental object in modern algebraic geometry. Riemann's original insight that one should study the continuous parameters (which he called {\it moduli}) governing the deformations of Riemann surfaces has given rise to rich developments in geometry, as well as to several fruitful connections to other fields of mathematics and science, including combinatorics, representation theory, integrable systems and string theory.

A milestone in the study of intersection theory of moduli spaces of curves is Deligne and Mumford's introduction of the notion of stable curves \cite{DMspace}, which gives rise to a smooth and modular compactification  $\mathcal{M}_{g,n}\subset \overline{\mathcal{M}}_{g,n}$, with a normal crossing boundary parameterizing curves with nodal singularities.

The notion of tautological classes (\cite{ac:cag, morita84, Mumford1983}), i.e., Chow (or cohomology) classes defined via the intrinsic geometry of the objects parameterized, identified a rich, and yet accessible slice of the intersection ring of $\overline{\mathcal{M}}_{g,n}$. A set of additive generators for the tautological ring, indexed by decorated graphs is introduced in \cite{graberpandharipande}, and their product structure is spelled out via a combinatorial algorithm in \cite{graberpandharipande, Yang,yang2010intersection}.  
The Chern classes $\lambda_j$ of the Hodge bundle $\mathbb{E}$, and $\psi_i$ of the cotangent line bundles $\mathbb{L}_i$ are tautological, since the above bundles are constructed from the relative dualizing sheaf $\omega_\pi$ via natural operations of push-forward and pull-back.  {\it Hodge integrals}, referring to intersection numbers of arbitrary polynomials in lambda and psi classes, have been studied in \cite{FINC,faber2000hodge, faber2000logarithmic}; the degree of a Hodge integral typically refers to the degree of the polynomial integrated,  where lambda classes are considered variables of degree one, and psi classes of degree zero:  linear Hodge integrals thus consist of intersection numbers with at most one lambda class but an arbitrary number of psi insertions. This is a somewhat unfortunate convention, as lambda and psi classes have also their notion of degree as Chow classes, but it is a rooted tradition in the field.

Through the technique of virtual localization \cite{gp:virtloc}, Gromov-Witten invariants of  targets with a torus action are computed in terms of Hodge integrals; in particular,  quadratic Hodge integrals control the Gromov-Witten theory of any toric surfaces. This has provided a fruitful link connecting tautological intersection theory on the moduli spaces of curves with mirror symmetry (\cite{ck:mirror,clay:mirr}).

Mumford's relation, which refers to the fact that the positive Chern classes of $\mathbb{E}\oplus \mathbb{E}^\vee$ vanish, has served as an essential tool in mirror symmetry, as it allows to reduce the computations of certain families of Gromov-Witten invariants of threefolds, which in principle are cubic Hodge integrals,  in terms of linear Hodge integrals, which are much better understood. This perspective gives a proof of the Aspinwall-Morrison formula \cite{am:top, ele:am} for the computation of Gromov-Witten invariants of the resolved conifold.

In \cite{schubert1991new}, Schubert studies an alternative compactification of the moduli space of curves, where the stability condition is tweaked to allow curves with cuspidal singularities, and to disallow curves with elliptic tails. Such curves are called {\it pseudostable}. The resulting moduli spaces are smooth and proper and have a boundary structure which is not quite normal crossing, but close enough to play well with tropicalization, see \cite{CM:pseudo}. It was later shown \cite{HH} that moduli spaces of pseudostable curves arise in the log minimal model program of $\overline{\mathcal{M}}_{g,n}$, as the birational model following the first wall-crossing.

The study of the tautological intersection theory of moduli spaces of pseudostable curves is initiated in \cite{PSHI}; the strategy  is to
consider the contraction morphism $\mathcal{T}: \overline{\mathcal{M}}_{g, n} \rightarrow \overline{\mathcal{M}}^{\ps}_{g, n}$ and
apply the projection formula to relate tautological cycles on moduli spaces of pseudostable curves to analogous ones on moduli spaces of stable curves. 
It is shown that psi classes are stable under pullback via $\mathcal{T}$, and a combinatorial formula expressing the pullback of lambda classes is given, see Theorem \ref{Pullback}:
\begin{equation}\label{eq:pbla}
 \mathcal{T}^*(\lambda_j) = \sum_{i=0}^j \frac{1}{i!}\mathcal{G}_*^i(p_0^*(\lambda_{j-i})).   
\end{equation}
It is a simple consequence  that stable and pseudostable linear Hodge integrals coincide.

Explicit computations in low genus show that Mumford's relation does not hold in the pseudostable case, and pseudostable quadratic Hodge integrals do not agree with their stable counterparts. While the comparison formulas for individual lambda classes from \cite{PSHI} provide in principle a solution to computing any pseudostable Hodge integral in term of stable ones, the combinatorial complexity of a brute force approach makes it not viable, except for very small values of $g$ and $n$, even with the use of specialized computer software.

The main technical thrust of this work has been to harness the combinatorial complexity of the problem by identifying  non-trivial recursive structure.
In order to compute the total Chern class of $\mathcal{T}^\ast(\mathbb{E}\oplus \mathbb{E}^\vee)$ in genus $g$, we first substitute the relations \eqref{eq:pbla} for each lambda class, then expand the product formally in the decorated graph algebra via the algorithm of \cite{Yang}. The resulting terms can be then partitioned into three disjoint subsets, each of which may be formally bijected with the terms for the entire computation in genus $g-1$. After applying the inductive hypothesis, it then becomes manageable to sum the three resulting terms and obtain the pseudostable version of Mumford's relation.

The remarkably simple structure relating quadratic stable and pseudostable Hodge integrals does not carry over to cubic or higher degree  Hodge integrals. Nonetheless, the availability of natural combinatorial expressions for some quadratic Hodge parts in terms of boundary and descendants opens up the possibility of approaching the study of specific families of cubic Hodge integrals, by using such relations to reduce the degree. 

In particular, we think it should be possible to define notions of {\it pseudostable Gromov-Witten} invariants, and relate them to their stable counterparts via localization. While a careful study of moduli spaces of pseudostable maps and their virtual classes has still not happened, it is reasonable to expect and almost ``guess" what the ingredients contributing to the localized virtual class should be. With this foundational work in place, the pseudostable Gromov-Witten invariants of the resolved conifold should be within reach. It would be interesting to see whether pseudostable Gromov-Witten invariants play a role in the picture of global mirror symmetry, similarly to how other variations of stability conditions do.

\subsection{Structure of the paper} The paper is organized as follows. In Section \ref{sec:Background} we give the background necessary to make this work reasonably self-contained: we introduce moduli spaces of stable and pseudostable curves and the combinatorics of the decorated graph algebra controlling their tautological intersection theory.
In Section \ref{sec:Mumford}, we state and prove the pseudostable version of Mumford's relation, and show as an application some natural relations between Hodge and descendant classes that arise from it.
In Section \ref{sec:quadratic}, we extract the relationship among stable and pseudostable quadratic Hodge integrals, and as an application compute certain families of pseudostable quadratic Hodge integrals.
\subsection{Acknowlegments}
We are grateful to Enrico Arbarello, Seth Ireland, and Dusty Ross for conversations related to the project.
Both authors acknowledge with gratitude support by the NSF, through DMS-2100962.

\section{Background}\label{sec:Background}

\subsection{The moduli spaces of stable and pseudostable curves} Stable and pseudostable curves can have certain types of singularities, and the two of interest are \textit{nodes} and \textit{cusps}. A node is a point locally analytically isomorphic to $x^2 = y^2$, while cusps are locally isomorphic to $x^2 = y^3$. \textit{Special points} of  a curve are any singularities or marked points. On the normalization of a singular curve, inverse images of singularities are also considered special points. 

The moduli spaces of stable and pseudostable curves, denoted $\overline{\mathcal{M}}_{g, n}$ and $\overline{\mathcal{M}}^{\ps}_{g, n}$, respectively, both parameterize families of curves of arithmetic genus $g$ with $n$ marked points. What distinguishes the two spaces are their notions of stability, which we now define and discuss.

\begin{definition}\label{DEFstable} A curve of genus $g$ with $n$ marked points is \emph{stable} if it satisfies the following conditions:
\begin{enumerate}
    \item All marked points are distinct and contained in the smooth locus of the curve.
    \item All singularities of the curve are nodes.
    \item Each connected component of the normalization of the curve with genus $g$ and $s$ special points has $(g, s)$ not equal to $(0, 0), (0, 1), (0, 2)$, or $(1, 0).$
\end{enumerate}
The third condition is equivalent to $g$ and $s$ satisfying $2g - 2 + s > 0$.
\end{definition}

\begin{definition}\label{DEFps} A curve of genus $g$ with $n$ marked points is \emph{pseudostable} if it satisfies the following conditions:
\begin{enumerate}
    \item All marked points are distinct and contained in the smooth locus of the curve.
    \item All singularities of the curve are nodes or cusps.
    \item Each connected component of the normalization of the curve with genus $g$ and $s$ special points has $(g, s)$ not equal to $(0, 0), (0, 1), (0, 2), (1, 0), (1, 1)$, or $(2, 0)$.
\end{enumerate}
The third condition is equivalent to $g$ and $s$ satisfying $g + s > 2$.
\end{definition}
Comparing the definitions shows that the two types of curves are largely the same, but with two key differences. One is that pseudostable curves can have cusps, while stable curves cannot. The other difference is that stable curves can have connected components with indices $(g, s)$ equal to $(1, 1)$ or $(2, 0)$, while pseudostable curves cannot. Components with $(g, s) = (1, 1)$ are referred to as \textit{elliptic tails}.

There is a natural map from $\overline{\mathcal{M}}_{g, n}$ to $\overline{\mathcal{M}}^{\ps}_{g, n}$, where if a stable curve in $\overline{\mathcal{M}}_{g, n}$ has an elliptic tail, that elliptic tail is contracted to a cusp, thus giving a curve in $\overline{\mathcal{M}}^{\ps}_{g, n}$. We define this map more formally in the following definition.

\begin{definition} [As in \cite{PSHI}] \label{Tdef}
    Let $C$ be a stable curve of genus $g$ with marked points $q_1, \dots, q_n$ so that $[(C, q_1, \dots, q_n)] \in \overline{\mathcal{M}}_{g, n}$. Write $$C = C_0 \cup E_1 \cup \dots \cup E_r,$$ where $E_1, \dots, E_r$ are the elliptic tails of $C$. Let $\hat{C}$ be the unique pseudostable curve with marked points $\hat{q}_1, \dots, \hat{q}_n$ that admits a morphism $T: C \rightarrow \hat{C}$ such that 
    \begin{enumerate}
        \item $T$ is an isomorphism when restricted to $C \setminus (E_1 \cup \dots \cup E_r)$,
        \item $T(E_i)$ is a cusp on $\hat{C}$ for all $i = 1, \dots, r$, and
        \item $T(q_i) = \hat{q}_i$.
    \end{enumerate}
    With this notation, we have the following map on moduli spaces:
    \begin{align*}
        \mathcal{T}: \overline{\mathcal{M}}_{g, n} &\rightarrow \overline{\mathcal{M}}^{\ps}_{g, n}\\
        [(C, q_1, \dots, q_n)] &\mapsto [(\hat{C}, \hat{q}_1, \dots, \hat{q}_n)]
    \end{align*}
\end{definition}
Since the two moduli spaces are smooth Deligne-Mumford stacks, the map in Definition \ref{Tdef} induces a pull-back on the Chow rings of the moduli spaces:
$$
\mathcal{T}^\ast: A^\ast(\overline{\mathcal{M}}^{\ps}_{g, n}) \rightarrow A^\ast(\overline{\mathcal{M}}_{g, n}).
$$
\subsection{Tautological classes}

We introduce two types of tautolgical classes which are the main characters in this work: the $\psi$ classes are the first Chern class of
{\it cotangent line bundles}, the $\lambda$ classes are Chern classes of the {\it Hodge bundle}. We adopt a common shortcut in moduli theory and informally introduce these bundles by describing a natural identification of their fibers with a vector space depending on the geometry of the objects parameterized.


The fiber of the $i$-th cotangent line bundle $\mathbb{L}_i$ over a point $[(C, q_1, \dots, q_n)] \in \overline{\mathcal{M}}_{g, n}$ is the cotangent line to $C$ at $q_i$, and $\psi_i$ is defined as the first Chern class of $\mathbb{L}_i$:
$$
\psi_i = c_1(\mathbb{L}_i) \in A^1(\overline{\mathcal{M}}_{g, n}) \text{ for } i = 1, \dots, n.
$$
The fiber of the Hodge bundle $\mathbb{E}$ over a point $[(C, q_1, \dots, q_n)]$ is the $g$-dimensional vector space of global sections of the dualizing sheaf of $C$, and $\lambda_j$ is the $j$-th Chern class of $\mathbb{E}$, that is,
$$
\lambda_j = c_j(\mathbb{E}) \in A^j(\overline{\mathcal{M}}_{g, n}) \text{ for } j = 0, \dots, g.
$$

Both the cotangent line bundle and Hodge bundle can be defined on $\overline{\mathcal{M}}^{\ps}_{g, n}$ in the same way (see \cite[Section 2.2, footnote 1]{PSHI} for an argument of why the pushforward of the relative dualizing sheaf is a vector bundle.), leading to the same definitions of both $\psi$ and $\lambda$ classes: 
$$
\psi_i = c_1(\mathbb{L}_i) \in A^1(\overline{\mathcal{M}}^{\ps}_{g, n}) \text{ for } i = 1, \dots, n,
$$ 
$$
\lambda_j = c_j(\mathbb{E}) \in A^j(\overline{\mathcal{M}}^{\ps}_{g, n}) \text{ for } j = 0, \dots, g.
$$ 

For convenience later on, we introduce the following notation for linear combinations of $\lambda$ classes:
\begin{equation}\label{BigLambda}
    \Lambda_g(k) := \sum_{i=0}^g k^i\lambda_{i}.
\end{equation}

The classes $\Lambda_g(k)$ split multiplicatively on boundary strata. If $G$ is a dual graph of a stable genus $g$, $n$ pointed curve, and $\Delta_G$ the corresponding stratum, then:
\begin{equation}\label{eq:splitLambda}
  \Lambda_g(k) \cdot [\Delta_G] = \prod_{v\in V(G)}  \Lambda_{g_v}(k). 
\end{equation}
An important property of $\lambda$ classes is that they satisfy a collection of relations known as {\it Mumford's relations}. Using notation \eqref{BigLambda}, all these relations are compactly encoded by the formula:
\begin{equation}\label{mumfrel}
 \Lambda_g(1)\Lambda_g(-1) = 1.   
\end{equation}

One way of studying pseudostable $\psi$ and $\lambda$ classes is by looking at their pullbacks $\mathcal{T}^*(\psi_i)$ and $\mathcal{T}^*(\lambda_j)$, where $\mathcal{T}$ is the map described in Definition \ref{Tdef}. The authors in \cite{PSHI} computed explicit formulas for each of these pullbacks, and this result (shown later in Theorem \ref{Pullback}) utilizes the following notation:

\begin{definition}\label{def:gluing}
    For $k \geq 1$, let
    \begin{equation}\label{eq:gluing}
        \mathcal{G}^k: \overline{\mathcal{M}}_{g-k, n+k} \times \overline{\mathcal{M}}^{\times k}_{1, 1} \rightarrow \overline{\mathcal{M}}_{g, n}
    \end{equation}
be the gluing map that sends $(C_0, E_1, \dots, E_k)$ to the curve which attaches each of $E_1, \dots, E_k$ to $C_0$ at the marked points $n+1, \dots, n+k$. If $k=0$, then $\mathcal{G}^0$ is the identity map from $\overline{\mathcal{M}}_{g, n}$ to itself.

Let $p_0$ be projection onto the factor $\overline{\mathcal{M}}_{g-k, n+k}$ of the domain in (\ref{eq:gluing}), and call this factor the \emph{root vertex} of the domain. We denote by $p_i$ the projection onto the $i$-th factor of $\overline{\mathcal{M}}^{\times k}_{1, 1}$ in the domain of (\ref{eq:gluing}).
\end{definition}

The image of the gluing map in (\ref{eq:gluing}) is a stratum parameterizing curves with $k$ elliptic tails. Note that the tautological $\lambda$ and $\psi$ classes may be pulled back from the factors of the product and pushed forward via $\mathcal{G}^k$ to obtain tautological classes supported on this stratum. 

We define 
\begin{equation}\label{eq:starbullet}
    \psi_{\bullet_i} := p_i^\ast(\psi_1) \text{ and } \psi_{\star_i} := p_0^\ast(\psi_{n+i}),
\end{equation}
so that, for example,
$$
\mathcal{G}^1_\ast(-\psi_{\bullet_1} - \psi_{\star_1}) = \mathcal{G}^1_\ast(-p_1^\ast(\psi_1) - p_0^\ast(\psi_{n+1})).
$$

\begin{theorem} [Theorem 2.4, \cite{PSHI}] \label{Pullback} For all pseudostable indices $(g, n)$, we have 
$$
\mathcal{T}^*(\psi_i) = \psi_i \text{ for all } i = 1, \dots, n
$$ 
and 
$$
\mathcal{T}^*(\lambda_j) = \sum_{i=0}^j \frac{1}{i!}\mathcal{G}_*^i(p_0^*(\lambda_{j-i})).
$$
\end{theorem}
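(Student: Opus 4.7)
The plan is to dispatch the psi-class identity by a direct local geometric observation and then reduce the lambda-class formula to a calculation of Chern classes of the Hodge bundle under the fiberwise contraction of elliptic tails. For the psi classes, I first note that by Definitions \ref{DEFstable} and \ref{DEFps} the marked points lie in the smooth locus of every stable curve and are automatically disjoint from the elliptic tails, which are precisely the components contracted to cusps. Hence $\mathcal{T}$ restricts to an isomorphism in a formal neighborhood of each universal marked section, canonically identifying $\mathcal{T}^*(\mathbb{L}_i^{ps})$ with $\mathbb{L}_i$ and yielding $\mathcal{T}^*(\psi_i) = \psi_i$.

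For the lambda classes, I would set up the fiberwise contraction of universal families $\tilde{\mathcal{T}}: \mathcal{C} \to \mathcal{T}^*\mathcal{C}^{ps}$, so that cohomology-and-base-change gives $\mathcal{T}^*\mathbb{E}^{ps} = \pi_*\tilde{\mathcal{T}}^*\omega_{\pi^{ps}}$. The essential local input is the isomorphism
\[
\tilde{\mathcal{T}}^*\omega_{\pi^{ps}} \cong \omega_\pi \otimes \mathcal{O}_{\mathcal{C}}(\mathcal{E}),
\]
where $\mathcal{E} \subset \mathcal{C}$ is the universal divisor parameterizing elliptic-tail components. This identity can be verified fiberwise: at a cusp (with $\delta$-invariant one), the pullback of the dualizing sheaf to the normalization is $\omega_{C_0}(2q)$, which matches $\omega_C(E)|_{C_0} = \omega_{C_0}(q) \otimes \mathcal{O}_{C_0}(q) = \omega_{C_0}(2q)$, while on the elliptic tail $E$ both sides restrict to $\mathcal{O}_E$ because $\omega_E$ is trivial and $\mathcal{O}_C(E)|_E = \mathcal{O}_E(-q)$ compensates the $\omega_E(q) = \mathcal{O}_E(q)$ factor coming from $\omega_\pi|_E$.

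From this isomorphism, the Chern classes of $\mathcal{T}^*\mathbb{E}^{ps}$ may be extracted by pushing forward the short exact sequence $0 \to \omega_\pi \to \omega_\pi(\mathcal{E}) \to \omega_\pi(\mathcal{E})|_{\mathcal{E}} \to 0$ along $\pi$. I would then stratify by the number $k$ of elliptic tails: on the image of $\mathcal{G}^k$, the correction bundle $\omega_\pi(\mathcal{E})|_\mathcal{E}$ restricts trivially to each tail (since $\omega_\pi(\mathcal{E})|_E \cong \mathcal{O}_E$), so that $\mathcal{T}^*\mathbb{E}^{ps}$ picks up no contribution from the Hodge bundles of the $\overline{\mathcal{M}}_{1,1}$ factors and reduces to $p_0^*\mathbb{E}_{g-k,n+k}$. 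The coefficient $\frac{1}{k!}$ arises because $\mathcal{G}^k$ is $k!$-to-one onto its image: the tails are unordered geometrically but ordered in the domain $\overline{\mathcal{M}}_{g-k,n+k} \times \overline{\mathcal{M}}_{1,1}^{\times k}$.

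The main obstacle is the combinatorial bookkeeping, since the pushforward of a single short exact sequence captures only the first-order correction, while the theorem records contributions from iterated self-intersections of the elliptic-tail locus (strata of codimensions $1, 2, \ldots, g$). A clean way to package this is induction on the genus $g$: one reduces a $k$-elliptic-tail contribution to a single tail via $\mathcal{G}^1$ followed by the inductive hypothesis on $\overline{\mathcal{M}}_{g-1, n+1}$, and combines the recursion with excess-intersection identities along the self-intersections of the elliptic-tail divisor to assemble all contributions into the claimed sum.
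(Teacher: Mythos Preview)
This theorem is not proved in the present paper. It is stated in Section~\ref{sec:Background} as a background result quoted verbatim from \cite{PSHI} (Theorem~2.4 there), and the paper uses it as a black box throughout Sections~\ref{sec:Mumford} and~\ref{sec:quadratic}. There is therefore no proof in this paper to compare your proposal against.

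That said, a brief comment on your sketch: the psi-class argument is sound. For the lambda classes, the overall architecture---compare dualizing sheaves along the contraction, push forward, and stratify by number of elliptic tails---is indeed the strategy used in \cite{PSHI}. However, your claim that on the $k$-tail stratum ``$\mathcal{T}^*\mathbb{E}^{ps}$ reduces to $p_0^*\mathbb{E}_{g-k,n+k}$'' is not literally correct as a statement about bundles on $\overline{\mathcal{M}}_{g,n}$: the pullback $\mathcal{T}^*\mathbb{E}^{ps}$ is a rank-$g$ bundle everywhere, and what actually happens is a filtration whose graded pieces over the stratum involve both $p_0^*\mathbb{E}$ and trivial contributions from the tails. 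The correct packaging in \cite{PSHI} proceeds via a Grothendieck--Riemann--Roch computation (or equivalently by tracking the Chern character through the short exact sequence you wrote down, including the higher-order excess terms), rather than by a direct bundle identification on strata. Your final paragraph correctly identifies this bookkeeping as the crux, but the inductive scheme you propose is vaguer than what is needed; in \cite{PSHI} the formula is obtained in one shot from GRR, not by induction on $g$.
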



For notational convenience, we define 
\begin{equation}\label{eq:lambdahat}
    \hat{\lambda}_j := \mathcal{T}^*(\lambda_j).
\end{equation}
\subsection{Dual graphs of boundary strata} In this subsection, we introduce notation and procedures for performing tautological intersections via an algebra of \emph{decorated dual graphs}; we follow the treatment  in \cite{graberpandharipande, Yang}. 

Each boundary stratum  in $\overline{\mathcal{M}}_{g, n}$ is indexed by the dual graph of the general curve parameterized by points of the stratum; vertices $V$ represent connected components, edges $E$ represent nodes, and half-edges $H$ are marked points of the normalization of the curve. To each vertex, we  associate a number $g \in \mathbb{Z}_{\geq0}$ for the genus of that connected component. There is an involution $i: H \rightarrow H$ which exchanges half-edges that are part of the same edge and has as fixed points the marked point set $N$. We denote by $\Sigma^\ast(\mathcal{\overline{M}}_{g, n})$ the set of dual graphs of stable curves of genus $g$ with $n$ marked points.

\begin{definition}[\cite{graberpandharipande, Yang}]\label{G-structure} Given two dual graphs $G,A \in \Sigma^\ast(\mathcal{\overline{M}}_{g, n})$, a $G$\emph{-structure} on $A$ is an identification of a specialization of $G$ with $A$. Precisely, it is a triple $$(\alpha: V_A \twoheadrightarrow V_G, \beta: H_G \hookrightarrow H_A, \gamma: H_A \setminus \text{im}(\beta) \rightarrow V_G)$$
which satisfies:
\begin{enumerate}
    \item the map $\beta$ commutes with the involution $(\beta \circ i_G = i_A \circ \beta)$ and induces an isomorphism from the fixed points of $G$ to the fixed points of $A$ ($N_G \xrightarrow{\sim} N_A$);
    \item any half-edge $h \in \text{im}(\beta)$ is incident to $v$ if and only if $\beta^{-1}(h)$ is incident to $\alpha(v)$;
    \item if $h \in H_A \setminus \text{im}(B)$ is incident to $v$, then $\gamma(h) = \alpha(v)$;
    \item if $v \in V_G$, then the preimage $(\alpha^{-1}(v), \gamma^{-1}(v)/i_A)$ is a connected graph of genus $g(v)$.
\end{enumerate}
\end{definition}
We find it useful to think of $\beta$ as identifying directed edges of $G$ with their images, and we will informally talk about identified edges to mean the datum $\beta_{|e}: e\to \beta(e)$.

\begin{example}\label{ex:GStructure}
    Consider $\mathcal{G}_*^1(1)$ and $\mathcal{G}_*^2(1)$ in $\overline{\mathcal{M}}_{3, 0}$, which have the  dual graphs $G$ and $A$ depicted in Figure \ref{fig:dualG1G2}.
\begin{figure}[tb]
    \centering
\begin{center}
\begin{tikzpicture}[shorten >=1pt,->]
  \tikzstyle{vertex}=[circle,fill=black!25,minimum size=12pt,inner sep=2pt]
  \node[vertex] (G_2) at (0,0)   {2};
  \node[vertex] (G_3) at (0,1)  {1};
  \node[vertex] (G_4) at (3,1) {1};
  \node[vertex] (G_5) at (4,0)   {1};
  \node[vertex] (G_6) at (5,1)  {1};
   \node at (-1,0.5)  {{$G = $}};
      \node at (2,0.5)  {{$A = $}};
  \draw (G_2) -- (G_3) -- cycle;
  \draw (G_4) -- (G_5) -- (G_6) -- cycle;
\end{tikzpicture}
\end{center}
    \caption{The dual graphs for the strata $\mathcal{G}_*^1(1)$ and $\mathcal{G}_*^2(1)$ in $\overline{\mathcal{M}}_{3, 0}$. }
    \label{fig:dualG1G2}
\end{figure}
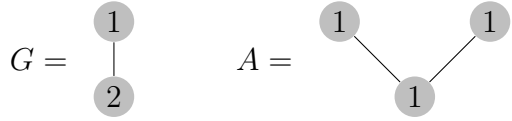
There are two $G$-structures $(\alpha, \beta, \gamma)$ on $A$, one for each of the ways $\beta$ identifies the edge of $G$ with one of the edges of $A$. Once $\beta$ is chosen, the maps $\alpha$ and $\gamma$ are determined based on the conditions from Definition \ref{G-structure}.
\end{example}

If a graph $A$ has both a $G$- and $H$-structure, then $A$ is said to be a $(G, H)$\emph{-graph} with a $(G, H)$\emph{-structure}. A $(G, H)$-graph is \emph{generic} if all its edges are identified with either an edge of $G$, and edge of $H$, or both. The set of all generic $(G, H)$-structures is denoted by $\Gamma(G, H)$. If two $(G, H)$-structures differ by an automorphism of $A$, then they are considered \emph{isomorphic}. Lastly, the \emph{common} $(G, H)$-\emph{edges} of $A$ are those that are identified with both an edge of $G$ and an edge of of $H$.

\begin{example}
    Let $G$ and $A$ again be the graphs shown in Figure \ref{fig:dualG1G2}, and let $H = G$. As shown in Example \ref{ex:GStructure}, there are two $G$-structures and two $H$-structures, so there are four $(G, H)$-structures on $A$. For two of these four, one of the edges of $A$ is identified with (the edge of) $G$, and the other is identified with $H$, and these are the two generic $(G, H)$-structures of $A$. Up to isomorphism, there is only one generic $(G, H)$-structure since the two differ by an automorphism of $A$. For this generic $(G, H)$-graph, there are no common $(G, H)$-edges.
\end{example}


For strata supporting $\lambda$ and $\psi$ classes, such as those seen in Theorem \ref{Pullback}, the classes are represented as decorations of the dual graph $G$: $\psi$ classes on half-edges of $G$, and $\lambda$ classes on vertices of $G$.
We denote by $\theta_v$ the decorations for a given vertex $v$: \begin{equation}\label{eq:vertdec}
  \theta_v = \prod_{i=1}^{n_v}\psi_i^{e_i}\prod_{j=1}^{f}\Lambda_{g_v}(k_j).  
\end{equation} 
While the shape of the Hodge part of the decoration in \eqref{eq:vertdec} appears very specific, one can in fact obtain an arbitrary monomial in $\lambda$ classes as the coefficient of the corresponding monomial in the variables $k_j$. Graphs with these decorations are called \emph{decorated graphs}. We denote $\underline{G}$ to be the underlying graph of $G$, that is, the graph $G$ without any decorations.

\begin{remark}
There is a natural geometric reason why two $\psi$ classes decorating two half-edges that are an orbit of the involution (i.e., $(h_-,h_+)$ belonging to the same edge $e$) often appear summed together: for a divisor of compact type, this sum is the Euler class of its normal bundle. While technically this sum should correspond to a sum of two graphs, each with one of the half-edges decorated by the $\psi$ class, it is extremely convenient to abuse notation and say that this sum correspond to a single graph with an edge decorated by $(\psi_{h_-} + \psi_{h_+})$. We adopt this convention, extending it by bilinearity to arbitrary polynomial decorations of $e$ by the classes $\psi_{h_-}, \psi_{h_+}$.   
\end{remark}

\begin{definition}[based on \cite{graberpandharipande, Yang}]\label{Yang} Let $G$ and $H$ denote two decorated graphs with vertex decorations as in \eqref{eq:vertdec}, and let $\Gamma(G, H)$ denote the set of all generic $(G, H)$-structures $(\alpha, \beta, \gamma)$. The product of  $G$ and $H$ is given by:
$$
G \cdot H := \sum_{A\in\Gamma(\underline{G}, \underline{H})} \frac{1}{|\text{Aut}(A)|}F_A(G, H)\cdot A,
$$
where 
\begin{align*}
    f_A(G, v) &= \prod_{i=1}^{n_v}\psi_{\beta(i)}^{e_i}\prod_{j=1}^{f}\left(\prod_{w \in \alpha^{-1}(v)}\Lambda_{g_w}(k_j)\right),\\
    F_A(G, H) &= \prod_{v \in V_G} f_A(G, v) f_A(H, v) \prod_{e_i = (\bullet_i, \star_i)} (-\psi_{\bullet_i} - \psi_{\star_i}).
\end{align*}
The last product above is taken over all common $(\underline{G},\underline{H})$-edges of $A$, and $-\psi_{\bullet_i}$ and $- \psi_{\star_i}$ are psi classes associated to half-edges of the dual graph, defined analogously as in (\ref{eq:starbullet}).
\end{definition} 

Yang shows in \cite{Yang} that this operation turns the set of decorated graphs into an algebra, and that there's a ring homomorphism from the algebra of decorated dual graphs to the tautological ring of $\overline{\mathcal{M}}_{g,n}$, namely 
\begin{equation}\label{eq: Yang}
    \Phi : \Sigma^\ast(\overline{\mathcal{M}}_{g, n}) \rightarrow R^\ast(\overline{\mathcal{M}}_{g, n}), 
\end{equation}
which sends a decorated dual graph to its associated tautological class.

\begin{example}
Consider $\mathcal{G}_*^1(1)$ and $\mathcal{G}_*^2(1)$ in $\overline{\mathcal{M}}_{3, 0}$, with dual graphs $G$ and $H$, respectively, shown in Figure \ref{fig:2dualG1G2}. Up to isomorphism, there are two generic $(\underline{G}, \underline{H})$-graphs, which we'll denote $A_1$ and $A_2,$ depicted in Figure \ref{fig:GenericA1A2}.

Note that $|\text{Aut}(A_1)| = 2!$, but for each common edge of $G$ and $H$ in $A_1$, there are $\binom{2}{1} = 2$ ways to choose an edge from $H$, so the coefficient of these terms is $\frac{2}{2!} = 1.$ Similarly, $|\text{Aut}(A_2)| = 3!,$ but there are $3!$ ways to order the $3$ collective edges of $G$ and $H$, so the coefficient of $A_2$ is $\frac{3!}{3!} = 1.$ 
\begin{align*}
    G \cdot H &= \frac{2}{2!}(-\psi_{\bullet_1} - \psi_{\star_1})A_1 + \frac{2}{2!}(-\psi_{\bullet_2} - \psi_{\star_2})A_1 + \frac{3!}{3!}A_2\\
    &= (-\psi_{\bullet_1} - \psi_{\star_1})A_1 + (-\psi_{\bullet_2} - \psi_{\star_2})A_1 + A_2
\end{align*}
This result is depicted in Figure \ref{fig:Product}.

\begin{figure}[tb]
    \centering
\begin{center}
\begin{tikzpicture}[shorten >=1pt,->]
  \tikzstyle{vertex}=[circle,fill=black!25,minimum size=12pt,inner sep=2pt]
  \node[vertex] (G_2) at (0,0)   {2};
  \node[vertex] (G_3) at (0,1)  {1};
  \node[vertex] (G_4) at (3,1) {1};
  \node[vertex] (G_5) at (4,0)   {1};
  \node[vertex] (G_6) at (5,1)  {1};
   \node at (-1,0.5)  {{$G = $}};
      \node at (2,0.5)  {{$H = $}};
  \draw (G_2) -- (G_3) -- cycle;
  \draw (G_4) -- (G_5) -- (G_6) -- cycle;
\end{tikzpicture}
\end{center}
    \caption{The dual graphs for the strata $\mathcal{G}_*^1(1)$ and $\mathcal{G}_*^2(1)$ in $\overline{\mathcal{M}}_{3, 0}$. }
    \label{fig:2dualG1G2}
\end{figure}
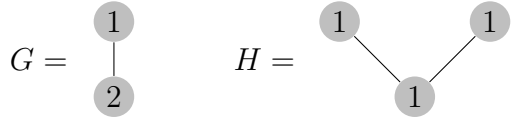

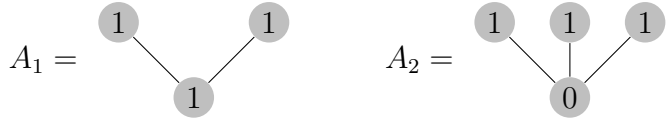
\begin{figure}[tb]
    \centering
\begin{center}
\begin{tikzpicture}[shorten >=1pt,->]
  \tikzstyle{vertex}=[circle,fill=black!25,minimum size=12pt,inner sep=2pt]
  \node[vertex] (G_1) at (0,0)   {1};
  \node[vertex] (G_2) at (-1,1)  {1};
  \node[vertex] (G_3) at (1,1) {1};
  \node[vertex] (G_4) at (4,1) {1};
  \node[vertex] (G_5) at (5,0)   {0};
  \node[vertex] (G_6) at (5,1)  {1};
  \node[vertex] (G_7) at (6,1)  {1};
     \node at (-2,0.5)  {{$A_1 = $}};
     \node at (3,0.5)  {$A_2 = $};
  \draw (G_2) -- (G_1) -- (G_3) -- cycle;
  \draw (G_4) -- (G_5) -- (G_6) -- cycle;
  \draw (G_5) -- (G_7) -- cycle;
\end{tikzpicture}
\end{center}
    \caption{Up to isomorphism, the two generic $(G, H)$-graphs for $G$ and $H$ in Figure \ref{fig:2dualG1G2}.}
    \label{fig:GenericA1A2}
\end{figure}

\begin{figure}[tb]
    \centering
\begin{center}
\begin{tikzpicture}[shorten >=1pt,->]
  \tikzstyle{vertex}=[circle,fill=black!25,minimum size=12pt,inner sep=2pt]
  \node[vertex] (G_1) at (0,0)   {1};
  \node[vertex] (G_2) at (-1,1)  {1};
  \node[vertex] (G_3) at (1,1) {1};
  \node[vertex] (G_4) at (3.5,0)   {1};
  \node[vertex] (G_5) at (2.5,1)  {1};
  \node[vertex] (G_6) at (4.5,1) {1};
  \node[vertex] (G_7) at (7,0)   {1};
  \node[vertex] (G_8) at (6,1)  {1};
  \node[vertex] (G_9) at (8,1) {1};
     \node at (-.95,0.55)  {$-\psi_{\bullet_1}$};
     \node at (2.85,0.25)  {$-\psi_{\star_1}$};
     \node at (8,0.55)  {$-\psi_{\bullet_2}$};
     \node at (1.75,0.5)  {{$+$}};
     \node at (5.25,0.5)  {{$+$}};
  \draw (G_2) -- (G_1) -- (G_3) -- cycle;
  \draw (G_5) -- (G_4) -- (G_6) -- cycle;
  \draw (G_8) -- (G_7) -- (G_9) -- cycle;
\end{tikzpicture}
\begin{tikzpicture}[shorten >=1pt,->]
  \tikzstyle{vertex}=[circle,fill=black!25,minimum size=12pt,inner sep=2pt]
  \node[vertex] (G_1) at (-0.25,0)   {1};
  \node[vertex] (G_2) at (-1.25,1)  {1};
  \node[vertex] (G_3) at (0.75,1) {1};
  \node[vertex] (G_4) at (2.25,1) {1};
  \node[vertex] (G_5) at (3.25,0)   {0};
  \node[vertex] (G_6) at (3.25,1)  {1};
  \node[vertex] (G_7) at (4.25,1)  {1};
     \node at (0.45,0.25)  {$-\psi_{\star_2}$};
     \node at (-2,0.5)  {{$+$}};
     \node at (1.5,0.5)  {{$+$}};
  \draw (G_2) -- (G_1) -- (G_3) -- cycle;
  \draw (G_4) -- (G_5) -- (G_6) -- cycle;
  \draw (G_5) -- (G_7) -- cycle;
\end{tikzpicture}
\end{center}
    \caption{The resulting product of $G$ and $H$.}
    \label{fig:Product}
\end{figure}
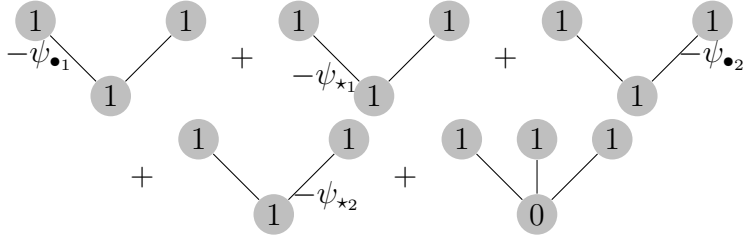

\end{example}

While this fact is not strictly relevant to this work, we mention for the benefit of the reader that this algorithm has been coded in the freely available program {\tt admcycles} \cite{delecroix2022admcycles}, which is a powerful tool for making and checking computations in the tautological ring.

\section{Mumford's relation for pseudostable $\lambda$ classes}\label{sec:Mumford}

In this section, we prove the pseudostable version of Mumford's relation (Theorem \ref{MFPC}) and provide some applications to the computation of Hodge integrals.

\begin{theorem} \label{MFPC} For all pseudostable indices $(g, n)$, we have 
$$
(1 + \hat\lambda_1 + \dots + \hat\lambda_g)(1 - \hat\lambda_1 + \dots + (-1)^g\hat\lambda_g) =  \sum_{i=0}^g\frac{1}{i!}\mathcal{G}_*^i\left(\prod_{j=1}^i(\psi_{\star_j}-\psi_{\bullet_j})\right).
$$
\end{theorem}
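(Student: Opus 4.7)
The plan is a direct unfolding. By Theorem \ref{Pullback}, one has
$$\sum_{j=0}^g t^j\hat\lambda_j=\sum_{i=0}^g\frac{t^i}{i!}\mathcal{G}_*^i\bigl(p_0^*\Lambda_{g-i}(t)\bigr),$$
so specializing to $t=\pm 1$ and taking the product, the left-hand side of the theorem becomes a double sum of products $\mathcal{G}_*^i(p_0^*\Lambda_{g-i}(1))\cdot\mathcal{G}_*^j(p_0^*\Lambda_{g-j}(-1))$, each of which I evaluate using Yang's algorithm (Definition \ref{Yang}). The decorated graphs $G_i,G_j$ are bouquets whose only edges are tail edges; consequently, any generic $(G_i,G_j)$-structure produces a target graph $A$ that is itself a bouquet on $l$ tails, and each tail of $A$ is identified with a tail of $G_i$ only, of $G_j$ only, or with tails of both. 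Writing the respective counts as $a=l-j$, $b=l-i$, $c=i+j-l$, I read off the decorations on $A$: the main vertex inherits $\Lambda_{g-l}(1)\Lambda_{g-l}(-1)=1$ by the stable Mumford relation (\ref{mumfrel}) applied via the splitting (\ref{eq:splitLambda}); each ``only $G_i$'' tail inherits $\Lambda_1(-1)=1-\lambda_1$ from the splitting of $G_j$'s main decoration; each ``only $G_j$'' tail similarly inherits $\Lambda_1(1)=1+\lambda_1$; and each common tail has trivial lambda decoration, while its common edge carries the Yang factor $-\psi_\bullet-\psi_\star$.

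To assemble the answer, I count the generic $(G_i,G_j)$-structures having classification $(a,b,c)$: there are $\frac{l!}{a!\,b!\,c!}i!\,j!$ of them. Folded together with $\frac{1}{|\text{Aut}(A)|}=\frac{1}{l!}$ and the outer coefficient $\frac{(-1)^j}{i!\,j!}$, and then summed over $a+b+c=l$, a multinomial reorganization---valid because $\mathcal{G}^l_*$ symmetrizes over the $l$ tail factors---collapses the contribution from $l$-tail bouquets into
$$\frac{1}{l!}\,\mathcal{G}_*^l\!\left(\prod_{r=1}^l\bigl[(\psi_{\bullet_r}+\psi_{\star_r})+(1-\lambda_1^{(r)})-(1+\lambda_1^{(r)})\bigr]\right)=\frac{1}{l!}\,\mathcal{G}_*^l\!\left(\prod_{r=1}^l(\psi_{\star_r}+\psi_{\bullet_r}-2\lambda_1^{(r)})\right).$$
I finish by invoking the standard identity $\lambda_1=\psi_1$ in $A^*(\overline{\mathcal{M}}_{1,1})_{\mathbb{Q}}$ (a consequence of $\dim\overline{\mathcal{M}}_{1,1}=1$ together with $\int\lambda_1=\int\psi_1=\tfrac{1}{24}$), which replaces $\lambda_1^{(r)}$ by $\psi_{\bullet_r}$ on each tail factor; each factor then simplifies to $\psi_{\star_r}-\psi_{\bullet_r}$, and summing over $l$ reproduces exactly the right-hand side.

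The main obstacle I foresee is justifying the restriction to bouquet targets: one must verify that no generic $(G_i,G_j)$-structure produces extra vertices in the $\alpha$-preimages of tail vertices of $G_i$ or $G_j$, which requires a genus/connectedness analysis based on the constraints in Definition \ref{G-structure}. The multinomial reorganization under $\mathcal{G}^l_*$ is also delicate and needs a careful symmetrization argument. The introduction outlines an alternative inductive strategy---partitioning the expansion into three subsets each bijecting with the genus $g-1$ computation---which I would adopt as a fallback, with base case $g=0$ trivial and inductive step extracting a single elliptic tail via the factorization $\mathcal{G}^i=\mathcal{G}^1\circ(\mathcal{G}^{i-1}\times\mathrm{id})$.
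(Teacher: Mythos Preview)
Your direct approach is correct and genuinely different from the paper's. The paper proceeds by induction on $g$: in the inductive step it observes that terms supported on fewer than $g$ elliptic tails are in formal bijection with the entire genus $g-1$ computation (via lowering the root genus by one), and then treats the terms with exactly $g$ tails by partitioning them into three groups according to whether the \emph{last} edge comes from the $G$-structure only, the $H$-structure only, or both; each group is again in bijection with the genus $g-1$ computation, and the three resulting factors sum to $\psi_{\star_g}-\psi_{\bullet_g}$. Your argument instead fixes $l$ and sums over all $(i,j)$ and all tail classifications simultaneously, collapsing to a product at once---this is closer in spirit to the paper's proof of Theorem~\ref{QHI} than to its proof of Theorem~\ref{MFPC}, and it is arguably cleaner since it avoids the induction entirely.

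On your two acknowledged obstacles: the restriction to bouquet targets is the only genuine gap, but it is routine. Edges internal to $R_i:=\alpha_{G_i}^{-1}(c_i)$ must be $G_j$-only, hence join $R_j$ to a unique $T_{j,m}$; since each $T_{j,m}$ receives exactly one $G_j$-edge, every vertex of $R_i\setminus R_j$ has internal degree~$1$, forcing $|R_i\cap R_j|=1$ and $R_i$ to be a simple star centered at that vertex $w_0$ (the marked-point condition, or a genus count when $n=0$, guarantees the intersection is nonempty). The same holds for $R_j$, and then stability of genus-$0$ vertices rules out any further degeneration of the tail-preimages. Your second concern is actually a non-issue: once you keep track of the \emph{specific} partition of the $l$ tails (rather than just the sizes $(a,b,c)$), the $i!\,j!$ structures over each partition all carry the same $F_A$, the outer $\frac{(-1)^j}{i!\,j!}$ and the inner $\frac{i!\,j!}{l!}$ cancel to $\frac{(-1)^{b+c}}{l!}$, and the sum over ordered set-partitions is \emph{literally} the product $\prod_r(X_r+Y_r+Z_r)$ by distributivity---no appeal to the $S_l$-invariance of $\mathcal{G}^l_*$ is needed.
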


The proof of Theorem \ref{MFPC} is rather technical, so before we start, we study toy examples that will help in understanding the general strategy. The proof of Theorem \ref{MFPC} will be by induction on $g$, and the following example  serves as the base case with $g = 1$.

\begin{example}\label{g=1} For genus $g=1$, and any fixed $n\geq 1$,  consider
$ (1 + \hat{\lambda}_1)(1 - \hat{\lambda}_1) \in A^*(\overline{\mathcal{M}}_{1, n})
$, 
where $\hat{\lambda}_1 = \lambda_1 + \mathcal{G}_*^1(1)$ by Theorem \ref{Pullback}. Making this substitution, we obtain 
$$
(1 + \lambda_1 + \mathcal{G}_*^1(1))(1 - \lambda_1 - \mathcal{G}_*^1(1)),
$$
which using the $\Lambda_g(k)$ notation from (\ref{BigLambda}) is equal to
$$
(\Lambda_1(1) + \mathcal{G}_*^1(1))(\Lambda_1(-1) - \mathcal{G}_*^1(1)).
$$
This product produces $4$ terms: 
\begin{enumerate}
    \item $\Lambda_1(1)\Lambda_1(-1) = 1$, being Mumford's relation for stable curves.
    \item $\Lambda_1(1)\cdot -\mathcal{G}_*^1(1) = - \mathcal{G}_*^1(p_1^\ast(\Lambda_1(1)))$
    \item $\mathcal{G}_*^1(1) \cdot \Lambda_1(-1) = \mathcal{G}_*^1(p_1^\ast(\Lambda_1(-1)))$
    \item $\mathcal{G}_*^1(1)\cdot -\mathcal{G}_*^1(1) = -\mathcal{G}_*^1(-\psi_{\bullet_1}-\psi_{\star_1})$
\end{enumerate}
Recall that when  $(g,n) = (1,1)$, $\lambda_1 = \psi_{\bullet_1},$ (\cite[Proposition 2.21  and Example 2.26]{zvonkine2014introduction}) so these four terms sum together and give
\begin{align*}
    &1 - \mathcal{G}_*^1(p_1^\ast(\Lambda_1(1))) + \mathcal{G}_*^1(p_1^\ast(\Lambda_1(-1))) - \mathcal{G}_*^1(-\psi_{\bullet_1}-\psi_{\star_1})\\ 
    = &1 - \mathcal{G}_*^1(p_1^\ast(1 + \lambda_1)) + \mathcal{G}_*^1(p_1^\ast(1 - \lambda_1)) - \mathcal{G}_*^1(-\psi_{\bullet_1}-\psi_{\star_1})\\
    = &1 + \mathcal{G}_*^1((-1 -\psi_{\bullet_1}) + (1 - \psi_{\bullet_1}) + (\psi_{\bullet_1} + \psi_{\star_1}))\\
    = &1 + \mathcal{G}_*^1(\psi_{\star_1}-\psi_{\bullet_1}),
\end{align*}
which shows that Theorem \ref{MFPC} holds in the case of $g=1$.
\end{example}

In the following example, we show a similar computation to the one in Example \ref{g=1}, but in arbitrary genus $g$: we wish to illustrate that if we restrict our attention only to strata parameterizing curves with at most one elliptic tail, then the computation is formally identical independently of the genus. This statement will be true for any fixed number of elliptic tails.

\begin{example}\label{g=g} For genus $g\geq 2$, consider
$$
 (\Lambda_g(1) + \mathcal{G}_*^1(p_0^\ast(\Lambda_{g-1}(1)))) \cdot (\Lambda_g(-1) - \mathcal{G}_*^1(p_0^\ast(\Lambda_{g-1}(-1)))) \in A^*(\overline{\mathcal{M}}_{g, n}).
$$
Distributing the terms gives $4$ products: 
\begin{enumerate}
    \item $$\Lambda_g(1)\Lambda_g(-1) = 1$$
    \item 
    \begin{align*}
        \Lambda_g(1)\cdot -\mathcal{G}_*^1(p_0^\ast(\Lambda_{g-1}(-1))) &= - \mathcal{G}_*^1(p_0^\ast(\Lambda_{g-1}(-1)\Lambda_{g-1}(1))p_1^\ast(\Lambda_1(1)))\\ &= -\mathcal{G}_*^1(p_1^\ast(\Lambda_1(1)))
    \end{align*}
    (Note: The final equality uses Mumford's relation in genus $g-1$.)
    \item 
    \begin{align*}
        \mathcal{G}_*^1(p_0^\ast(\Lambda_{g-1}(1))) \cdot \Lambda_g(-1) &= \mathcal{G}_*^1(p_0^\ast(\Lambda_{g-1}(-1)\Lambda_{g-1}(1))p_1^\ast(\Lambda_1(-1)))\\ &= \mathcal{G}_*^1(p_1^\ast(\Lambda_1(-1)))
    \end{align*}
    
    \item 
    \begin{align*}
    &\mathcal{G}_*^1(p_0^\ast(\Lambda_{g-1}(1)))\cdot -\mathcal{G}_*^1(p_0^\ast(\Lambda_{g-1}(-1)))\\
    = -&\mathcal{G}_*^1(p_0^\ast(\Lambda_{g-1}(-1)\Lambda_{g-1}(1))(-\psi_{\bullet_1}-\psi_{\star_1})) - \mathcal{G}_\ast^2(p_0^\ast(\Lambda_{g}(-1)\Lambda_{g}(1)) \\
    = -&\mathcal{G}_*^1(-\psi_{\bullet_1}-\psi_{\star_1}) - \mathcal{G}_\ast^2(1)
    \end{align*}
\end{enumerate}
Notice the only difference between the results above and those from Example \ref{g=1} is one additional term in the 4th product, that being $- \mathcal{G}_\ast^2(1)$. However, all terms supported on strata parameterizing curves with fewer than $2$ elliptic tails are in bijection with the 4 terms from Example \ref{g=1}, and their sum returns the same answer: $$1 + \mathcal{G}_*^1(\psi_{\star_1}-\psi_{\bullet_1}).$$
We will use this same idea in the proof of Theorem \ref{MFPC} by showing that the computation in genus $g$ is formally identical to that in genus $g-1$ when considering terms supported on strata parameterizing curves with fewer than $g$ elliptic tails.
\end{example}

In the following example, we study the terms of the genus $2$ case of Theorem \ref{MFPC} that are supported on strata parameterizing curves with exactly  $2$ elliptic tails. 

\begin{example}\label{g=2} Consider the following product in $A^*(\overline{\mathcal{M}}_{2, n})$:
    $$
 (\Lambda_2(1) + \mathcal{G}_*^1(p_0^\ast(\Lambda_{1}(1))) + \frac{1}{2!}\mathcal{G}_*^2(1)) \cdot (\Lambda_2(-1) - \mathcal{G}_*^1(p_0^\ast(\Lambda_{1}(-1))) + \frac{1}{2!}\mathcal{G}_*^2(1)).
    $$
The only products which produce terms supported on strata parameterizing curves with $2$ elliptic tails are the following:

\begin{enumerate}
    \item 
    $$
    \Lambda_2(1) \cdot \frac{1}{2!}\mathcal{G}_*^2(1) 
    $$
    $$
    = \frac{1}{2!}\mathcal{G}_*^2(p_1^\ast(\Lambda_{1}(1))p_2^\ast(\Lambda_{1}(1)))
    $$
    
    \item 
    $$
    \mathcal{G}_*^1(p_0^\ast(\Lambda_{1}(1))) \cdot (- \mathcal{G}_*^1(p_0^\ast(\Lambda_{1}(-1))))
    $$
    $$
    = -\mathcal{G}_*^1(-\psi_{\bullet_1}-\psi_{\star_1}) - \frac{1}{2!}\mathcal{G}_*^2(p_1^\ast(\Lambda_{1}(1))p_2^\ast(\Lambda_{1}(-1))) - \frac{1}{2!}\mathcal{G}_*^2(p_1^\ast(\Lambda_{1}(-1))p_2^\ast(\Lambda_{1}(1)))
    $$
    
    \item 
    $$
    \mathcal{G}_*^1(p_0^\ast(\Lambda_{1}(1)))\cdot \frac{1}{2!}\mathcal{G}_*^2(1)
    $$
    $$ = \frac{1}{2!}\mathcal{G}_*^2((-\psi_{\bullet_1}-\psi_{\star_1})p_2^\ast(\Lambda_{1}(1))) + \frac{1}{2!}\mathcal{G}_*^2(p_1^\ast(\Lambda_{1}(1))(-\psi_{\bullet_2}-\psi_{\star_2}))
    $$
    
    \item 
    $$
    \frac{1}{2!}\mathcal{G}_*^2(1) \cdot \Lambda_2(-1)
    $$
    $$
    = \frac{1}{2!}\mathcal{G}_*^2(p_1^\ast(\Lambda_{1}(-1))p_2^\ast(\Lambda_{1}(-1)))
    $$
    
    \item 
    $$
    \frac{1}{2!}\mathcal{G}_*^2(1) \cdot (-\mathcal{G}_*^1(p_0^\ast(\Lambda_{1}(1)))) 
    $$
    $$
    = -\frac{1}{2!}\mathcal{G}_*^2((-\psi_{\bullet_1}-\psi_{\star_1})p_2^\ast(\Lambda_{1}(-1))) - \frac{1}{2!}\mathcal{G}_*^2(p_1^\ast(\Lambda_{1}(-1))(-\psi_{\bullet_2}-\psi_{\star_2}))
    $$
    
    \item 
    $$
    \frac{1}{2!}\mathcal{G}_*^2(1) \cdot \frac{1}{2!}\mathcal{G}_*^2(1)
    $$
    $$
    = \frac{1}{2!}\mathcal{G}_*^2((-\psi_{\bullet_1}-\psi_{\star_1})(-\psi_{\bullet_2}-\psi_{\star_2}))
    $$
\end{enumerate}

From the $6$ products above, there are $9$ terms that are supported on strata parameterizing curves with $2$ elliptic tails. These can be partitioned into groups of three based on a common factor on the first elliptic tail.

Group 1, $\Lambda_1(1)$ on the first elliptic tail: 
$$
\frac{1}{2!}\mathcal{G}_*^2(p_1^\ast(\Lambda_{1}(1))p_2^\ast(\Lambda_{1}(1)))
$$
$$
- \frac{1}{2!}\mathcal{G}_*^2(p_1^\ast(\Lambda_{1}(1))p_2^\ast(\Lambda_{1}(-1))
$$
$$
\frac{1}{2!}\mathcal{G}_*^2(p_1^\ast(\Lambda_{1}(1))(-\psi_{\bullet_2}-\psi_{\star_2}))
$$

Group 2, $\Lambda_1(-1)$ on the first elliptic tail:
$$
- \frac{1}{2!}\mathcal{G}_*^2(p_1^\ast(\Lambda_{1}(-1))p_2^\ast(\Lambda_{1}(1)))
$$
$$
\frac{1}{2!}\mathcal{G}_*^2(p_1^\ast(\Lambda_{1}(-1))p_2^\ast(\Lambda_{1}(-1)))
$$
$$
-\frac{1}{2!}\mathcal{G}_*^2(p_1^\ast(\Lambda_{1}(-1))(-\psi_{\bullet_2}-\psi_{\star_2}))
$$

Group 3, $-\psi_{\bullet_1}-\psi_{\star_1}$ on the first elliptic tail:
$$
\frac{1}{2!}\mathcal{G}_*^2((-\psi_{\bullet_1}-\psi_{\star_1})p_2^\ast(\Lambda_{1}(1))) 
$$
$$
-\frac{1}{2!}\mathcal{G}_*^2((-\psi_{\bullet_1}-\psi_{\star_1})p_2^\ast(\Lambda_{1}(-1)))
$$
$$
\frac{1}{2!}\mathcal{G}_*^2((-\psi_{\bullet_1}-\psi_{\star_1})(-\psi_{\bullet_2}-\psi_{\star_2}))
$$

Notice in each group of three, the terms on the second elliptic tail are in bijection with the three terms added together at the end of Example \ref{g=1}. Thus, the terms in each group sum together in the same way (up to a negative) on the second elliptic tail:
$$
-p_2^\ast(\Lambda_{1}(1)) + p_2^\ast(\Lambda_{1}(-1)) - (-\psi_{\bullet_2}-\psi_{\star_2}) = \psi_{\star_2}-\psi_{\bullet_2}.
$$
Each of the groups are now as follows:

Group 1: 
$$
-\frac{1}{2!}\mathcal{G}_*^2(p_1^\ast(\Lambda_{1}(1))(\psi_{\star_2}-\psi_{\bullet_2}))
$$

Group 2: 
$$
\frac{1}{2!}\mathcal{G}_*^2(p_1^\ast(\Lambda_{1}(-1))(\psi_{\star_2}-\psi_{\bullet_2}))
$$

Group 3: 
$$
-\frac{1}{2!}\mathcal{G}_*^2((-\psi_{\bullet_1}-\psi_{\star_1})(\psi_{\star_2}-\psi_{\bullet_2}))
$$

Now that the computation has been reduced from $9$ terms to $3$, notice that these $3$ terms all have the same decorations on the second elliptic tail, namely $(\psi_{\star_2}-\psi_{\bullet_2})$, and the decorations on the first elliptic tail are in bijection with the same three terms from the end of Example \ref{g=1} as before. Thus, these three terms add together  on the first elliptic tail in the same way:
$$
-p_1^\ast(\Lambda_{1}(1)) + p_1^\ast(\Lambda_{1}(-1)) - (-\psi_{\bullet_1}-\psi_{\star_1}) = \psi_{\star_1}-\psi_{\bullet_1}.
$$
This leads in the following sum of the groups above:
$$
\frac{1}{2!}\mathcal{G}_*^2((\psi_{\star_1}-\psi_{\bullet_1})(\psi_{\star_2}-\psi_{\bullet_2}))
$$
This is exactly the term supported on $2$ elliptic tails claimed in the statement of Theorem \ref{MFPC}. We'll use the same idea in the proof by showing that for the genus $g$ computation,  the terms supported on $g$ elliptic tails  can be partitioned into three groups, each of which is in bijection with the terms supported on on $g-1$ elliptic tails in the genus $g-1$ computation.
\end{example}

We are now ready to prove Theorem \ref{MFPC}.

\begin{proof} We must compute
\begin{align} \label{eq:start}
     (&1 + \hat{\lambda}_1 + \dots + \hat{\lambda}_g)(1 - \hat{\lambda}_1 + \dots + (-1)^g\hat{\lambda}_g),
\end{align}
where by Theorem \ref{Pullback}, $\hat{\lambda}_j = \sum_{i=0}^j\frac{1}{i!}\mathcal{G}_*^i(p_0^*(\lambda_{j-i})).$ Substituting these expressions, exchanging the order of summation, and using the $\Lambda_{g}(k)$ notation, we can rewrite \eqref{eq:start}  as  
\begin{equation} \label{eq:big}
\left(\sum_{i=0}^g\frac{1}{i!}\mathcal{G}_*^i(p_0^*(\Lambda_{g-i}(1)))\right)\left(\sum_{j=0}^g\frac{(-1)^j}{j!}\mathcal{G}_*^j(p_0^*(\Lambda_{g-j}(-1)))\right). 
\end{equation}

We'll proceed by induction on $g.$ For the base case of $g = 1,$ see Example \ref{g=1}. Assume that the theorem is true for genus $g - 1,$ meaning 
\begin{equation}\label{eq:g-1}
    \left(\sum_{i=0}^{g-1}\frac{1}{i!}\mathcal{G}_*^i(p_0^*(\Lambda_{(g-1)-i}(1)))\right)\left(\sum_{j=0}^{g-1}\frac{(-1)^j}{j!}\mathcal{G}_*^j(p_0^*(\Lambda_{(g-1)-j}(-1)))\right)
\end{equation}
is equal to  
\begin{equation}\label{eq:induction}
    \sum_{i=0}^{g-1}\frac{1}{i!}\mathcal{G}_*^i\left(\prod_{j=1}^i(\psi_{\star_j}-\psi_{\bullet_j})\right)
\end{equation}
when working in genus $g-1$. 

In the genus $g$ case, the truncated product with the first $g-1$ terms is instead written as
$$
\left(\sum_{i=0}^{g-1}\frac{1}{i!}\mathcal{G}_*^i(p_0^*(\Lambda_{g-i}(1)))\right)\left(\sum_{j=0}^{g-1}\frac{(-1)^j}{j!}\mathcal{G}_*^j(p_0^*(\Lambda_{g-j}(-1)))\right),
$$ 
with the only difference from (\ref{eq:g-1}) being $\Lambda_{g-i}(1)$ and $\Lambda_{g-j}(-1)$ instead of $\Lambda_{(g-1)-i}(1)$ and $\Lambda_{(g-1)-j}(-1)$, respectively. However, the genus of the factor corresponding to the root vertex also increases by $1$, so the resulting computation will be formally identical to the genus $g-1$ case for all terms supported on the strata parameterizing curves with fewer than $g$ elliptic tails. One can define a bijection between the decorated graphs by assigning to a decorated graph of genus $g$ a corresponding genus $g-1$ decorated graph where the genus of the root vertex has been lowered by one;  this was demonstrated in Example \ref{g=g} by showing that the computation in the genus $g \geq 2$ case is the same as the genus $1$ case for terms supported on strata parameterizing curves with fewer than $2$ elliptic tails. Using the inductive hypothesis, the terms supported on strata parameterizing curves with fewer than $g$ elliptic tails simplify to
$$
\sum_{i=0}^{g-1}\frac{1}{i!}\mathcal{G}_*^i\left(\prod_{j=1}^i(\psi_{\star_j}-\psi_{\bullet_j})\right).
$$

What remains to show is that all terms supported on strata parameterizing curves with exactly $g$ elliptic tails sum to 
$$
\frac{1}{g!}\mathcal{G}_*^g\left(\prod_{j=1}^g(\psi_{\star_j}-\psi_{\bullet_j})\right).
$$

Consider all terms in the product (\ref{eq:big}) which are supported on a stratum parameterizing curves with exactly $g$ elliptic tails. Each term has a dual decorated graph $A$ with $g$ genus $1$ vertices, which has a coefficient based on the $(G, H)$-structure obtained from intersecting two strata $G$ and $H$ with an $G$-structure and $H$-structure, respectively. Each coefficient will include $\frac{1}{|\text{Aut}(A)|} = \frac{1}{g!}$ as in Definition \ref{Yang}.

Let the graph $G$ correspond to a term in the left sum of (\ref{eq:big}) and the graph $H$ correspond to a term in the right sum of (\ref{eq:big}), and consider the last edge and genus one vertex of the graph $A$. If the last edge of $A$ is only from the $G$-structure, then its coefficient will decorate the vertex with  $\Lambda_1(-1)$. If the last edge of $A$ is only from the $H$-structure, then its coefficient will have $-\Lambda_1(1)$ on the genus one vertex. If the last edge of $A$ is from both the $G$- and $H$-structure, then the edge is decorated by $-(-\psi_{\bullet_1} -\psi_{\star_1})$.

Consider the set of all possible graphs such that the coefficient that is pulled back from the last factor (i.e., decorating the last genus one vertex) is $\Lambda_1(-1).$ If we factor out this coefficient, what remains will be those coefficients that were pulled back from the other factors. As in Example \ref{g=2}, removing the last edge from every decorated graph gives a natural bijection between these remaining coefficients and the coefficients of the factors of the term supported on $g-1$ elliptic tails; we have shown inductively in (\ref{eq:induction}) those add up to:
$$
F_{g-1} := \prod_{j=1}^{g-1}(\psi_{\star_j}-\psi_{\bullet_j}),
$$
decorating the first $g-1$ edges of the dual graph. The same argument holds for the collection of all terms where the last vertex is decorated by $-\Lambda_1(1)$ or by $-(-\psi_{\bullet_1} -\psi_{\star_1})$. By induction, summing all these terms, the first $g-1$ edges  will be decorated by the same coefficient of $F_{g-1}$, so we can sum over the last factor to get 
$$
-p_g^\ast(\Lambda_{1}(1)) + p_g^\ast(\Lambda_{1}(-1)) - (-\psi_{\bullet_g}-\psi_{\star_g}) = \psi_{\star_g}-\psi_{\bullet_g}.
$$
Along with $F_{g-1}$ and $\frac{1}{g!},$ this returns the desired coefficient for strata parameterizing curves with exactly $g$ elliptic tails: 
$$\frac{1}{g!}F_{g-1}(\psi_{\star_g}-\psi_{\bullet_g}) = \frac{1}{g!}\prod_{j=1}^{g}(\psi_{\star_j}-\psi_{\bullet_j}),
$$
which concludes the proof.
\end{proof}

\subsection{Applications}

Much like the classic version of Mumford's formula, Theorem \ref{MFPC} provides a set of relations for $\lambda$ classes in $A^*(\overline{\mathcal{M}}^{\ps}_{g, n})$ for all pseudostable indices $(g, n)$. 

\begin{corollary}\label{Relations}
    For all pseudostable indices $(g, n)$ and $i = 0, \dots, g$, we have
    $$
    \sum_{j = \max\{0, 2i-g\}}^{\min\{2i, g\}}(-1)^j\hat{\lambda}_{2i-j}\hat{\lambda}_j = \frac{1}{i!}\mathcal{G}_*^i\left(\prod_{j=1}^i(\psi_{\star_j}-\psi_{\bullet_j})\right).
    $$
\end{corollary}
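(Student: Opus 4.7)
The plan is to extract the degree $2i$ homogeneous component of the identity in Theorem \ref{MFPC}. Using the $\Lambda$-notation, the left-hand side of that identity can be written as
$$
\hat{\Lambda}_g(1)\,\hat{\Lambda}_g(-1) \;=\; \sum_{a,b=0}^g (-1)^b\,\hat{\lambda}_a\,\hat{\lambda}_b,
$$
so its homogeneous part in Chow degree $m$ is the sum over pairs with $a+b=m$ subject to $0 \le a,b \le g$, namely
$$
\sum_{b=\max\{0,m-g\}}^{\min\{m,g\}} (-1)^b\,\hat{\lambda}_{m-b}\,\hat{\lambda}_b.
$$
(Incidentally, for $m$ odd this sum vanishes by the symmetry $\hat{\lambda}_a\hat{\lambda}_b = \hat{\lambda}_b\hat{\lambda}_a$ combined with the alternating sign, which is consistent with the RHS having only even-degree terms.)

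Next I would verify that the $i$-th summand on the right-hand side of Theorem \ref{MFPC} is homogeneous of Chow degree $2i$. A dimension count gives $\dim \overline{\mathcal{M}}_{g-i,n+i}\times \overline{\mathcal{M}}_{1,1}^{\times i} = (3g-3+n) - i$, so $\mathcal{G}^i_*$ raises codimension by $i$; the interior decoration $\prod_{j=1}^i(\psi_{\star_j}-\psi_{\bullet_j})$ contributes $i$ more, totalling $2i$. Hence the terms on the right are pairwise of distinct even degrees, so extracting the degree $2i$ piece selects exactly one summand.

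Equating the degree $2i$ parts of both sides of Theorem \ref{MFPC} therefore yields
$$
\sum_{b=\max\{0,2i-g\}}^{\min\{2i,g\}} (-1)^b\,\hat{\lambda}_{2i-b}\,\hat{\lambda}_b \;=\; \frac{1}{i!}\,\mathcal{G}^i_*\!\left(\prod_{j=1}^i(\psi_{\star_j}-\psi_{\bullet_j})\right),
$$
which is the desired identity. There is no real obstacle: the only subtlety is the bookkeeping of the summation bounds (coming from requiring $0 \le 2i-b \le g$ in addition to $0 \le b \le g$) and the verification of the Chow degree of each summand on the right, both of which are immediate.
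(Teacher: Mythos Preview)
Your proposal is correct and follows essentially the same approach as the paper: both extract the codimension $2i$ homogeneous component from the identity of Theorem~\ref{MFPC}, identifying the left-hand side as $\sum_j (-1)^j\hat\lambda_{2i-j}\hat\lambda_j$ with the stated summation bounds, and the right-hand side as the single term $\frac{1}{i!}\mathcal{G}_*^i\bigl(\prod_{j=1}^i(\psi_{\star_j}-\psi_{\bullet_j})\bigr)$ of codimension $i+i=2i$. Your additional remarks (the dimension count for $\mathcal{G}^i_*$ and the vanishing in odd degree) are correct and slightly more detailed than the paper's own argument.
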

\begin{proof}
With Theorem \ref{MFPC}, we know that the codimension $2i$ terms of 
\begin{equation}\label{eq:2iMumford}
    (1 + \hat{\lambda}_1 + \dots + \hat{\lambda}_g)(1 - \hat{\lambda}_1 + \dots + (-1)^g\hat{\lambda}_g)
\end{equation}
are equal to the codimension $2i$ term of 
\begin{equation}\label{eq:2igraphs}
    \sum_{i=0}^g\frac{1}{i!}\mathcal{G}_*^i\left(\prod_{j=1}^i(\psi_{\star_j}-\psi_{\bullet_j})\right).
\end{equation}

The codimension $2i$ terms of $(\ref{eq:2iMumford})$ are of the form
$$
(-1)^j\hat{\lambda}_{2i-j}\hat{\lambda}_j,
$$
where $\max\{0, 2i - g\} \leq j \leq \min\{2i, g\}$. The reason for $2i - g$ in the lower bound is that $\hat{\lambda}_{2i-j} = 0$ when $2i - j > g$. Similarly, we include $g$ in the upper bound of $j$ because $\hat{\lambda}_j = 0$ for $j > g$.

Finally, the codimension $2i$ term of (\ref{eq:2igraphs}) is 
$$
\frac{1}{i!}\mathcal{G}_*^i\left(\prod_{j=1}^i(\psi_{\star_j}-\psi_{\bullet_j})\right)
$$
since the codimension of each of $\mathcal{G}_*^i(1)$ and $\prod_{j=1}^i(\psi_{\star_j}-\psi_{\bullet_j})$ is $i$.
\end{proof}

The following example highlights a few specific relations from Corollary \ref{Relations}.

\begin{example}
Codimension $2g$ ($i = g$ in Corollary \ref{Relations}):
$$
\hat{\lambda}_g^2 = \frac{(-1)^g}{g!}\mathcal{G}_*^g\left(\prod_{j=1}^g(\psi_{\star_j} - \psi_{\bullet_j})\right).
$$
Codimension $2g-2$ ($i = g-1$ in Corollary \ref{Relations}):
$$
2\hat{\lambda}_g\hat{\lambda}_{g-2} - \hat{\lambda}_{g-1}^2 = \frac{(-1)^g}{(g-1)!}\mathcal{G}_*^{g-1}\left(\prod_{j=1}^{g-1}(\psi_{\star_j} - \psi_{\bullet_j})\right).
$$
Codimension $2$ ($i = 1$ in Corollary \ref{Relations}):
$$
2\hat{\lambda}_2 - \hat{\lambda}_1^2 = \frac{1}{2}\mathcal{G}_*^1\left(\psi_{\star_1} - \psi_{\bullet_1}\right).
$$
\end{example}

We can also apply Theorem \ref{MFPC} to Hodge integrals:

\begin{proposition} \label{MFint}
    For all pseudostable indices $(g, n)$,
    $$
    \int_{\overline{\mathcal{M}}^{\ps}_{g, n}}\frac{(1 + \lambda_1 + \dots + \lambda_g)(1 - \lambda_1 + \dots + (-1)^g\lambda_g)}{\prod_{j = 1}^n(1-\psi_j)} =  \sum_{i=0}^g\frac{(-1)^i}{24^ii!}\int_{\overline{\mathcal{M}}_{g-i, n+i}}\frac{1}{\prod_{j=1}^n(1-\psi_j)},
    $$ 
    where the denominator should be intended as a shorthand for the corresponding expansion as a product of geometric series.
\end{proposition}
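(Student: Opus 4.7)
The strategy is to pull the integrand back to $\overline{\mathcal{M}}_{g,n}$ via $\mathcal{T}$, rewrite the Mumford-type numerator using Theorem \ref{MFPC}, and then evaluate each resulting boundary integral via the projection formula for $\mathcal{G}^i$ combined with the product structure of $\overline{\mathcal{M}}_{g-i,n+i}\times \overline{\mathcal{M}}_{1,1}^{\times i}$.

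Since $\mathcal{T}$ is a birational morphism of smooth proper Deligne--Mumford stacks of the same dimension, the projection formula gives $\int_{\overline{\mathcal{M}}^{\ps}_{g,n}}\alpha = \int_{\overline{\mathcal{M}}_{g,n}}\mathcal{T}^\ast\alpha$ for any tautological class $\alpha$. Using $\mathcal{T}^\ast\psi_k=\psi_k$ (Theorem \ref{Pullback}) and substituting Theorem \ref{MFPC} for the numerator, the left-hand side becomes
$$\sum_{i=0}^g \frac{1}{i!}\int_{\overline{\mathcal{M}}_{g,n}} \mathcal{G}_\ast^i\Bigl(\prod_{j=1}^i(\psi_{\star_j}-\psi_{\bullet_j})\Bigr)\cdot \prod_{k=1}^n\frac{1}{1-\psi_k}.$$

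For each $i$, I would then apply the projection formula to $\mathcal{G}^i$; since the marked points $1,\ldots,n$ are untouched by the gluing, $(\mathcal{G}^i)^\ast\psi_k = p_0^\ast\psi_k$, so the denominator pulls back only to the root factor, while inside the pushforward $\psi_{\star_j}=p_0^\ast\psi_{n+j}$ and $\psi_{\bullet_j}=p_j^\ast\psi_1$. The substantive step is then a dimension count on $\overline{\mathcal{M}}_{g-i,n+i}\times \overline{\mathcal{M}}_{1,1}^{\times i}$: expanding $\prod_{j=1}^i(p_0^\ast\psi_{n+j}-p_j^\ast\psi_1)$ and invoking the K\"unneth-type splitting of integrals on products, any summand that chooses $p_0^\ast\psi_{n+j}$ on the $j$-th factor places a degree-zero class on $\overline{\mathcal{M}}_{1,1}$, which integrates to $0$ since $\dim\overline{\mathcal{M}}_{1,1}=1$. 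Only the term picking $-p_j^\ast\psi_1$ from every factor survives, yielding a sign $(-1)^i$, a factor $\int_{\overline{\mathcal{M}}_{1,1}}\psi_1 = 1/24$ from each of the $i$ elliptic tails, and the residual root integral $\int_{\overline{\mathcal{M}}_{g-i,n+i}} 1/\prod_k(1-\psi_k)$. Together with the $1/i!$ prefactor these assemble into the claimed coefficient $\frac{(-1)^i}{24^i\,i!}$, matching the right-hand side. I expect no serious obstacle beyond accurate bookkeeping of the projection formula and the K\"unneth split; the collapse of the product expansion to a single term is the only conceptual move.
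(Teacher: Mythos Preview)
Your argument is correct and follows essentially the same route as the paper: pull back via $\mathcal{T}$, apply Theorem \ref{MFPC}, then observe by a dimension count on each $\overline{\mathcal{M}}_{1,1}$ factor that only the $-\psi_{\bullet_j}$ term from each binomial survives, and evaluate $\int_{\overline{\mathcal{M}}_{1,1}}\psi_1=\tfrac{1}{24}$. The paper's write-up is somewhat terser about the projection/K\"unneth step, but the content is identical.
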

\begin{proof} We start by applying Theorem \ref{MFPC}:
\begin{equation*}
   \int_{\overline{\mathcal{M}}^{\ps}_{g, n}}\frac{(1 + \lambda_1 + \dots + \lambda_g)(1 - \lambda_1 + \dots + (-1)^g\lambda_g)}{\prod_{j = 1}^n(1-\psi_j)} = \int_{\overline{\mathcal{M}}_{g, n}}\sum_{i=0}^g\frac{1}{i!}\mathcal{G}_*^i\left(\frac{\prod_{j=1}^i(\psi_{\star_j}-\psi_{\bullet_j})}{\prod_{j = 1}^n(1-p_0^\ast(\psi_j))}\right). 
\end{equation*}
    
Notice that in the product $\prod_{j=1}^i(\psi_{\star_j}-\psi_{\bullet_j})$, the integral vanishes unless there is a class $-\psi_{\bullet_j}$ on each flag adjacent to the genus one tail-vertex. This reduces the Hodge integral to 
\begin{align}\label{eq:nroot}
    &\int_{\overline{\mathcal{M}}_{g, n}}\sum_{i=0}^g\frac{1}{i!}\mathcal{G}_*^i\left(\frac{\prod_{j=1}^i(-\psi_{\bullet_j})}{\prod_{j = 1}^n(1-p_0^\ast(\psi_j))}\right) \nonumber\\ 
    = &\sum_{i=0}^g\frac{1}{i!}\left(\int_{\overline{\mathcal{M}}_{1, 1}}-\psi_{1}\right)^i\left(\int_{\overline{\mathcal{M}}_{g-i, n+i}}\frac{1}{\prod_{j=1}^n(1-\psi_j)}\right).
\end{align}
The first integral of (\ref{eq:nroot}) is known to be $-\frac{1}{24}$ (see \cite{vakil2006moduli} or \cite{zvonkine2014introduction}), simplifying (\ref{eq:nroot}) to 
$$
\sum_{i=0}^g\frac{(-1)^i}{24^ii!}\int_{\overline{\mathcal{M}}_{g-i, n+i}}\frac{1}{\prod_{j=1}^n(1-\psi_j)},
$$
completing the proof.
\end{proof}

If we restrict Proposition \ref{MFint} to just a single $\psi$ class, that is, replace 
$
\frac{1}{\prod_{j = 1}^n(1-\psi_j)}$ by $ \frac{1}{1-\psi_1},
$
the resulting Hodge integrals can be easily computed with the string equation \cite{witten1990two}, leading to the following corollary.

\begin{corollary}
    \label{MFintcor}For all pseudostable indices $(g, n)$, we have
    $$\int_{\overline{\mathcal{M}}^{\ps}_{g, n}}\frac{(1 + \lambda_1 + \dots + \lambda_g)(1 - \lambda_1 + \dots + (-1)^g\lambda_g)}{1-\psi_1} = 0.$$
\end{corollary}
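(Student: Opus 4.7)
The plan is to apply the same reasoning as the proof of Proposition \ref{MFint}, but with the single insertion $\frac{1}{1-\psi_1}$ in place of $\frac{1}{\prod_j(1-\psi_j)}$. Since $\psi_1$ is pulled back from the main component through $p_0$, the dimensional vanishing argument on the $\overline{\mathcal{M}}_{1,1}$ factors (namely, that only $-\psi_{\bullet_j}$ can survive integration against the one-dimensional moduli) goes through unchanged, yielding
$$\int_{\overline{\mathcal{M}}^{\ps}_{g, n}} \frac{(1 + \lambda_1 + \dots + \lambda_g)(1 - \lambda_1 + \dots + (-1)^g\lambda_g)}{1-\psi_1} = \sum_{i=0}^g \frac{(-1)^i}{24^i i!} \int_{\overline{\mathcal{M}}_{g-i, n+i}} \frac{1}{1-\psi_1}.$$

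Next, I would apply the string equation \cite{witten1990two} to each term on the right-hand side. Forgetting the $n+i-1$ marked points other than $p_1$ (all of which carry no $\psi$ class) one at a time reduces $\int_{\overline{\mathcal{M}}_{g-i, n+i}} \frac{1}{1-\psi_1}$ to $\int_{\overline{\mathcal{M}}_{g-i, 1}} \psi_1^{3(g-i)-2}$ when $g-i \geq 1$, and to $1$ when $i=g$ (via the further reduction to $\overline{\mathcal{M}}_{0,3}$). The classical one-point Witten--Kontsevich formula $\int_{\overline{\mathcal{M}}_{g, 1}} \psi_1^{3g-2} = \frac{1}{24^g g!}$ then collapses each term to $\frac{1}{24^{g-i}(g-i)!}$, so the sum becomes
$$\sum_{i=0}^g \frac{(-1)^i}{24^i i!} \cdot \frac{1}{24^{g-i}(g-i)!} = \frac{1}{24^g g!} \sum_{i=0}^g (-1)^i \binom{g}{i} = \frac{(1-1)^g}{24^g g!} = 0$$
for $g \geq 1$, which is the desired vanishing.

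The main obstacle is the invocation of the one-point Witten--Kontsevich intersection number formula, which is classical but not elementary. Everything else is routine bookkeeping: the dimensional vanishing on $\overline{\mathcal{M}}_{1,1}$ and the projection formula (already present in the proof of Proposition \ref{MFint}), repeated use of the string equation, and finally the binomial identity $(1-1)^g = 0$. This binomial cancellation is the clean conceptual mechanism behind the vanishing, and it is the reason why replacing the full descendant expansion $\frac{1}{\prod_j(1-\psi_j)}$ by the single-factor $\frac{1}{1-\psi_1}$ yields zero.
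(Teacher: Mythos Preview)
Your proposal is correct and follows essentially the same route as the paper: invoke Proposition~\ref{MFint} (specialized to a single $\psi$ insertion), reduce each $\int_{\overline{\mathcal{M}}_{g-i,n+i}}\frac{1}{1-\psi_1}$ via the string equation to the one-point integral $\int_{\overline{\mathcal{M}}_{g-i,1}}\psi_1^{3(g-i)-2}=\frac{1}{24^{g-i}(g-i)!}$, and conclude with the alternating binomial sum $\sum_{i=0}^g(-1)^i\binom{g}{i}=0$. The paper's proof is identical in substance, differing only in notation (it writes $\frac{1}{24^g}\sum_i\frac{(-1)^i}{i!(g-i)!}$ rather than $\frac{1}{24^g g!}\sum_i(-1)^i\binom{g}{i}$).
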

\begin{proof} Using Proposition \ref{MFint} implies
\begin{equation*}
    \int_{\overline{\mathcal{M}}^{\ps}_{g, n}}\frac{(1 + \lambda_1 + \dots + \lambda_g)(1 - \lambda_1 + \dots + (-1)^g\lambda_g)}{1-\psi_1} = \sum_{i=0}^g\frac{(-1)^i}{24^ii!}\int_{\overline{\mathcal{M}}_{g-i, n+i}}\frac{1}{1-\psi_1}.
\end{equation*}    
Since the dimension of $\overline{\mathcal{M}}_{g-i, n+i}$ is $3g-2i + n - 3$, the only nonvanishing term in the geometric series expansion of $\frac{1}{1-\psi_1}$ is $\psi_1^{3g-2i+n-3}$, simplifying our work to 
\begin{equation}\label{eq:stringtoo}
\sum_{i=0}^g\frac{(-1)^i}{24^ii!}\int_{\overline{\mathcal{M}}_{g-i, n+i}}\psi_1^{3g-2i+n-3}.
\end{equation}
Using the string equation \cite{witten1990two}, we can reduce the number of marked points in the integral to get the following:
\begin{equation}\label{eq:string}
    \left(\sum_{i=0}^{g-1}\frac{(-1)^i}{24^ii!}\int_{\overline{\mathcal{M}}_{g-i, 1}}\psi_1^{3g-3i-2}\right) + \frac{(-1)^g}{24^gg!}\int_{\overline{\mathcal{M}}_{0, 3}}1.
\end{equation}

We can evaluate the integrals in the summation of (\ref{eq:string}) using an application of the Witten-Kontsevich Theorem for one-pointed psi class intersection numbers \cite{WKT}: 
\begin{equation}\label{eq:onepoint}
    \int_{\overline{\mathcal{M}}_{g-i, 1}} \psi_1^{3(g-i)-2} = \frac{1}{24^{g-i}(g-i)!}.  
\end{equation}
Since $\overline{\mathcal{M}}_{0, 3}$ is a point, then the latter integral of (\ref{eq:string}) is equal to 1. Together with (\ref{eq:onepoint}), this simplifies (\ref{eq:string}) to 
\begin{align}\label{eq:1/24}
    \left(\sum_{i=0}^{g-1}\frac{(-1)^i}{24^ii!} \cdot \frac{1}{24^{g-i}(g-i)!}\right) + \frac{(-1)^g}{24^gg!} &= \left(\sum_{i=0}^{g-1}\frac{(-1)^i}{24^gi!(g-i)!}\right) + \frac{(-1)^g}{24^gg!}\nonumber\\
    &= \left(\sum_{i=0}^{g}\frac{(-1)^i}{24^gi!(g-i)!}\right)\nonumber\\
    &= \frac{1}{24^g}\sum_{i=0}^g\frac{(-1)^i}{i!(g-i)!}\\
    &= 0.\nonumber
\end{align}
\end{proof}

The summation in (\ref{eq:1/24}) highlights another result for specific terms of Corollary \ref{MFintcor}:

\begin{corollary}\label{Cor:Cor}
    For all pseudostable indices $(g, n)$ and $k = 0, \dots, g$, we have $$\int_{\overline{\mathcal{M}}^{\ps}_{g, n}}\left(\sum_{j = \max\{0, 2k-g\}}^{\min\{2k, g\}}(-1)^j\lambda_{2k-j}\lambda_j\right)\psi_1^{3g-2k+n-3} = \frac{(-1)^k}{24^g k!(g-k)!}.$$
\end{corollary}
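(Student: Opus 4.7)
The plan is to reduce the statement to the dimension-counting and string-equation machinery already used in the proofs of Proposition \ref{MFint} and Corollary \ref{MFintcor}, using Corollary \ref{Relations} to identify the quadratic Hodge sum with a decorated boundary class. First I would rewrite the left-hand side by pushing forward along $\mathcal{T}$: since $\mathcal{T}$ has degree one and $\mathcal{T}^*\psi_1 = \psi_1$ by Theorem \ref{Pullback}, the projection formula turns the integral into
\[
\int_{\overline{\mathcal{M}}_{g,n}}\Bigl(\sum_j (-1)^j \hat\lambda_{2k-j}\hat\lambda_j\Bigr)\psi_1^{3g-2k+n-3},
\]
and Corollary \ref{Relations} identifies the parenthesized sum with $\tfrac{1}{k!}\mathcal{G}_*^k\!\left(\prod_{j=1}^k(\psi_{\star_j}-\psi_{\bullet_j})\right)$.

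Next I would apply the projection formula along $\mathcal{G}^k$ to push the integrand onto $\overline{\mathcal{M}}_{g-k,n+k}\times\overline{\mathcal{M}}_{1,1}^{\times k}$, obtaining
\[
\frac{1}{k!}\int_{\overline{\mathcal{M}}_{g-k,n+k}\times\overline{\mathcal{M}}_{1,1}^{\times k}} \prod_{j=1}^k(\psi_{\star_j}-\psi_{\bullet_j})\cdot p_0^*\!\left(\psi_1^{3g-2k+n-3}\right).
\]
A dimension argument, identical to the one in the proof of Proposition \ref{MFint}, then collapses the product: each $\overline{\mathcal{M}}_{1,1}$ factor is one-dimensional, so only the monomial that selects $-\psi_{\bullet_j}$ on every elliptic tail contributes. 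This yields a sign $(-1)^k$ and splits the integral into the product of $\int_{\overline{\mathcal{M}}_{1,1}}\psi_1 = \tfrac{1}{24}$ on each tail and $\int_{\overline{\mathcal{M}}_{g-k,n+k}}\psi_1^{3g-2k+n-3}$ on the root.

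Finally, the root integrand has exponent equal to the dimension $3(g-k)-3+(n+k)$, so repeated application of the string equation to drop the $n+k-1$ marked points carrying no $\psi$ reduces it to $\int_{\overline{\mathcal{M}}_{g-k,1}}\psi_1^{3(g-k)-2} = \tfrac{1}{24^{g-k}(g-k)!}$ via the Witten--Kontsevich one-point formula; in the boundary case $g=k$ the same reduction instead lands on $\int_{\overline{\mathcal{M}}_{0,3}}1=1$, which agrees with the formula. Assembling the three factors gives exactly $\tfrac{(-1)^k}{24^g k!(g-k)!}$. There is no substantive obstacle beyond bookkeeping: Corollary \ref{Relations} isolates the codimension-$2k$ piece of the pseudostable Mumford relation, and the remaining evaluations are precisely those appearing inside the proof of Corollary \ref{MFintcor}; in particular, the identity being proved here is a term-by-term refinement of the telescoping sum displayed in equation \eqref{eq:1/24}.
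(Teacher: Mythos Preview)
Your proof is correct and essentially follows the paper's approach: both reduce the integral, via the codimension-$2k$ piece of the pseudostable Mumford relation, to $\frac{(-1)^k}{24^k k!}\int_{\overline{\mathcal{M}}_{g-k,n+k}}\psi_1^{3g-2k+n-3}$ and then evaluate using the string equation and the one-point formula \eqref{eq:onepoint}. The only organizational difference is that you invoke Corollary~\ref{Relations} directly to land on the single $k$-tail term, whereas the paper routes through the full sum of Proposition~\ref{MFint} (as in the proof of Corollary~\ref{MFintcor}) and then observes that dimension reasons kill every summand except $i=k$; the resulting computations are identical.
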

\begin{proof}
We observe first of all that the Hodge part in the parenthesis is the degree $2k$ part of $(1 + \lambda_1 + \dots + \lambda_g)(1 - \lambda_1 + \dots + (-1)^g\lambda_g)$. Proceeding as in the proof of Corollary \ref{MFintcor} above,
we have
\begin{equation}
   \int_{\overline{\mathcal{M}}^{\ps}_{g, n}}\left(\sum_{j = \max\{0, 2k-g\}}^{\min\{2k, g\}}(-1)^j\lambda_{2k-j}\lambda_j\right)\psi_1^{3g-2k+n-3} = \sum_{i=0}^g\frac{(-1)^i}{24^ii!}\int_{\overline{\mathcal{M}}_{g-i, n+i}}\psi_1^{3g-2k+n-3}.  
\end{equation}
 By dimension reasons the only nonvanishing summand corresponds to $i=k$. We recognize this term as  the $k$-th summand of  
 (\ref{eq:stringtoo}), which proves the result.
\end{proof}

The following example highlights specific cases of Corollary \ref{Cor:Cor}.

\begin{example}\label{ex:MumfordHodge1} For $k = g:$ 
    $$
    \int_{\overline{\mathcal{M}}^{\ps}_{g, n}} \lambda_g^2\psi_1^{g-3+n} = \frac{1}{24^gg!}.
    $$
    For $k = g-1:$ 
    $$
    \int_{\overline{\mathcal{M}}^{\ps}_{g, n}} (2\lambda_g\lambda_{g-2} - \lambda_{g-2}^2)\psi_1^{g-1+n} = \frac{-1}{24^g(g-1)!}.
    $$
    For $k = 1:$
    $$
    \int_{\overline{\mathcal{M}}^{\ps}_{g, n}} (2\lambda_2 - \lambda_1^2)\psi_1^{3g-5+n} = \frac{-1}{24^g(g-1)!}.
    $$
\end{example}

\begin{remark}
We observe a curious duality among the following stable and pseudostable Hodge integrals:
    \begin{equation}\label{eq:lineone}
    \int_{\overline{\mathcal{M}}^{\ps}_{g, n}} \lambda_g^2\psi_1^{g-3+n} =  \int_{\overline{\mathcal{M}}_{g, n}}\frac{(1 + \lambda_1 + \dots + \lambda_g)(1 - \lambda_1 + \dots + (-1)^g\lambda_g)}{1-\psi_1},
    \end{equation}
    \begin{equation}\label{eq:linetwo}
    \int_{\overline{\mathcal{M}}_{g, n}} \lambda_g^2\psi_1^{g-3+n} =   \int_{\overline{\mathcal{M}}^{\ps}_{g, n}}\frac{(1 + \lambda_1 + \dots + \lambda_g)(1 - \lambda_1 + \dots + (-1)^g\lambda_g)}{1-\psi_1}.
    \end{equation}

For $g = 0$ and any value of $n\geq 3$, both lines are giving the  equation $1 = 1$. In higher genus,  \eqref{eq:lineone} evaluates to $\frac{1}{24^gg!}$: we saw the pseudostable integral in Example \ref{ex:MumfordHodge1}, whereas the stable evaluation is obtained from \eqref{eq:onepoint} after applying Mumoford's relation to the numerator. The second line \eqref{eq:linetwo} evaluates to $0$: in the stable case this follows from Mumford's relation $\lambda_g^2 = 0$, whereas the pseudostable integral is Corollary \ref{MFintcor}.

\end{remark}

\section{Quadratic Pseudostable Hodge integrals}\label{sec:quadratic}

In this section, we give a formula for computing all quadratic pseudostable Hodge integrals in terms of their stable counterparts, Theorem \ref{QHI}. We then show in Theorem \ref{Generating} that the formulas  arising from Theorem \ref{QHI} are nicely encoded as a relation among generating functions for quadratic Hodge integrals. 
We begin with a Lemma which will be used to 
simplify some of the algebra in the proof of Theorem \ref{QHI}.

\begin{lemma}\label{lemma}For all non-negative integers $k$,
$$\sum_{(r, s) \in T}\frac{(-1)^{r+s-k}}{(r+s-k)!(k-s)!(k-r)!} = \frac{1}{k!},
$$
where $T\subset \mathbb{Z}^2$ is the set of integer tuples $(r, s)$ with $0 \leq r \leq k, 0 \leq s \leq k,$ and $r + s \geq k$.
\end{lemma}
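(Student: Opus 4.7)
My plan is to prove this via a change of variables that turns the sum into a trinomial expansion. Concretely, I would introduce new indices $a = k-r$, $b = k-s$, and $c = r+s-k$. The constraints $0 \le r \le k$, $0 \le s \le k$, and $r+s \ge k$ translate cleanly into $a, b, c \ge 0$ together with $a+b+c = k$, so the substitution is a bijection between $T$ and the set of triples in $(\mathbb{Z}_{\ge 0})^3$ summing to $k$. Under this substitution the summand becomes $\frac{(-1)^c}{a!\, b!\, c!}$.

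Next, I would multiply through by $k!$ and rewrite the general term as the trinomial coefficient $\binom{k}{a,b,c}(-1)^c$. The identity to be shown then reduces to
\begin{equation*}
\sum_{\substack{a+b+c=k \\ a,b,c \ge 0}} \binom{k}{a,b,c} 1^a\, 1^b\, (-1)^c = 1,
\end{equation*}
which is immediate from the multinomial theorem applied to $(1+1+(-1))^k = 1^k = 1$. Dividing back by $k!$ recovers the claim.

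There is essentially no obstacle here: the only thing to be careful about is confirming that the substitution really does biject $T$ with the simplex of nonnegative integer triples summing to $k$ (in particular that the condition $r+s \ge k$ is equivalent to $c \ge 0$ and the bounds $r, s \le k$ correspond to $a, b \ge 0$). Once that bookkeeping is verified, the whole identity is a one-line consequence of the multinomial theorem.
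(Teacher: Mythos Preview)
Your argument is correct. The substitution $a=k-r$, $b=k-s$, $c=r+s-k$ does biject $T$ with $\{(a,b,c)\in(\mathbb{Z}_{\ge 0})^3 : a+b+c=k\}$, and after multiplying by $k!$ the sum is exactly the multinomial expansion of $(1+1-1)^k=1$.

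The paper takes a slightly different, iterated route: it writes the sum as $\sum_{r=0}^k \frac{1}{(k-r)!}\sum_{s=k-r}^k \frac{(-1)^{r+s-k}}{(r+s-k)!(k-s)!}$, substitutes $N=r+s-k$ in the inner sum, and recognizes that inner sum as $\sum_{N=0}^r \frac{(-1)^N}{N!(r-N)!}=\frac{1}{r!}(1-1)^r$, which vanishes for $r>0$ and equals $1$ for $r=0$, collapsing the outer sum to $\frac{1}{k!}$. Your trinomial reparametrization packages this in a single step and makes the symmetry in the three factorials visible; the paper's version uses only the binomial theorem but needs the extra observation that the inner sum kills all terms with $r>0$. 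Both are short, but yours is the cleaner formulation.
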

\begin{proof}
    Note that $T$ can equivalently be defined by all $(r, s)$ where $0 \leq r \leq k$ and $k-r \leq s \leq k$. So the sum is 
    $$
    \sum_{r=0}^k\left(\sum_{s=k-r}^k\frac{(-1)^{r+s-k}}{(r+s-k)!(k-s)!(k-r)!}\right) = \sum_{r=0}^k\frac{1}{(k-r)!}\left(\sum_{s=k-r}^k\frac{(-1)^{r+s-k}}{(r+s-k)!(k-s)!}\right).
    $$
    For convenience, define $N := r+s-k$, which leads to
    $$
    \sum_{r=0}^k\frac{1}{(k-r)!}\left(\sum_{N = 0}^r\frac{(-1)^{N}}{N!(r-N)!}\right).
    $$
    The inner sum is equal to $0$, except when $r = 0$, in which case it is equal to $1$. This allows us to reduce the work to just that case:
    \begin{align*}
        \sum_{r=0}^0\frac{1}{(k-r)!}
        &= \frac{1}{k!},
    \end{align*}
    proving the result.
\end{proof}

\begin{theorem}\label{QHI}
    For all pseudostable indices $(g,n)$,  $i, j = 0, \dots, g$, and for any polynomial $F \in \mathbb{C}[a_1, \dots, a_n]$, $$\int_{\overline{\mathcal{M}}^{\ps}_{g, n}} \lambda_{i}\lambda_{j}F(\psi_1, \dots, \psi_n) = \sum_{k=0}^{\min\{i, j\}} \frac{1}{24^kk!}\int_{\overline{\mathcal{M}}_{g-k, n+k}} \lambda_{i-k}\lambda_{j-k}F(\psi_1, \dots, \psi_n).$$
\end{theorem}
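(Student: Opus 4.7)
The plan is to use the projection formula together with the pullback formula of Theorem \ref{Pullback} to convert the pseudostable integral into a sum of stable integrals, and then recognize the combinatorial factor produced as exactly the sum evaluated in Lemma \ref{lemma}. Since $\mathcal{T}^\ast(\psi_m) = \psi_m$, the projection formula gives
$$\int_{\overline{\mathcal{M}}^{\ps}_{g,n}} \lambda_i \lambda_j F(\psi) \;=\; \int_{\overline{\mathcal{M}}_{g,n}} \mathcal{T}^\ast(\lambda_i)\,\mathcal{T}^\ast(\lambda_j)\, F(\psi).$$
Substituting $\mathcal{T}^\ast(\lambda_i) = \sum_{a=0}^i \tfrac{1}{a!}\mathcal{G}^a_\ast(p_0^\ast \lambda_{i-a})$ and similarly for $\lambda_j$, I will expand the resulting product of boundary strata classes using Yang's algorithm (Definition \ref{Yang}). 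For each pair $(a,b)$, the non-trivial contributions are supported on strata of curves with $k$ elliptic tails for $\max(a,b) \le k \le a+b$, indexed by a graph $A$ in which $c = a+b-k$ tails are ``common'' (identified with edges of both $G$ and $H$) while $k-b$ tails are only in $G$ and $k-a$ tails are only in $H$.

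The next step is to pull this decomposition back to $\overline{\mathcal{M}}_{g-k,n+k}\times \overline{\mathcal{M}}_{1,1}^{\times k}$ via $\mathcal{G}^k$ and apply Künneth. The decorations on the $k$ elliptic tail factors come from two sources: (i) each of the $c$ common tails inherits the Euler-class decoration $-\psi_{\bullet_m}-\psi_{\star_m}$ from Yang's product formula, and (ii) each of the $k-a$ $H$-only tails (resp. $k-b$ $G$-only tails) receives a contribution from the splitting of $\lambda_{i-a}$ (resp. $\lambda_{j-b}$), which on a genus-$1$ component contributes either $1$ or $\psi_1=\lambda_1$. Because $\int_{\overline{\mathcal{M}}_{1,1}} 1 = 0$ and $\int_{\overline{\mathcal{M}}_{1,1}}\psi_1 = \tfrac{1}{24}$, only the summands in which every tail factor acquires a class of degree $1$ survive: common tails must pick the $-\psi_{\bullet_m}$ part (giving $-\tfrac{1}{24}$ each), and every non-common tail must absorb a $\psi_1$ from the lambda splitting (giving $+\tfrac{1}{24}$ each), forcing the root to carry exactly $\lambda_{i-k}\lambda_{j-k}$.

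A careful Yang count shows that on a graph with $k$ tails and a fixed common/$G$-only/$H$-only pattern, the number of $(\underline G,\underline H)$-structures is $\tfrac{k!}{(a+b-k)!(k-a)!(k-b)!}a!b!$; dividing by $|\mathrm{Aut}(A)|=k!$ and combining with the $\tfrac{1}{a!b!}$ coming from the $\mathcal{T}^\ast$ expansion produces the coefficient $\tfrac{1}{(a+b-k)!(k-a)!(k-b)!}$. Combining all of the above, and noting the sign $(-1)^{a+b-k}$ generated by the $c=a+b-k$ factors of $-\tfrac{1}{24}$, the full integral becomes
$$\int_{\overline{\mathcal{M}}_{g,n}} \mathcal{T}^\ast(\lambda_i)\mathcal{T}^\ast(\lambda_j) F \;=\; \sum_{k} \frac{1}{24^k}\int_{\overline{\mathcal{M}}_{g-k,n+k}}\lambda_{i-k}\lambda_{j-k}F \;\cdot\!\!\sum_{(a,b)\in T_k}\frac{(-1)^{a+b-k}}{(a+b-k)!(k-a)!(k-b)!},$$
where $T_k=\{(a,b):0\le a,b\le k,\ a+b\ge k\}$, with the understanding that $\lambda_m=0$ for $m<0$ so only $k\le \min(i,j)$ contributes.

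The inner sum is precisely the quantity evaluated in Lemma \ref{lemma}, which equals $\tfrac{1}{k!}$, and this yields the claimed identity. The main obstacle is the careful bookkeeping in the Yang expansion: one must correctly identify, for each pair $(a,b,k)$, which splittings and which edge decorations survive integration over the elliptic tail factors, and keep track of the combinatorial multiplicities. Once all contributions are organized by the parameter $k$, the elegant cancellation in Lemma \ref{lemma} removes every trace of the auxiliary indices $a$ and $b$, leaving the clean formula of the theorem.
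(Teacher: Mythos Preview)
Your proposal is correct and follows essentially the same approach as the paper's proof: both convert the pseudostable integral to a stable one via the projection formula and Theorem \ref{Pullback}, expand the product $\hat\lambda_i\hat\lambda_j$ with Yang's algorithm, observe that only the degree-one decorations on the elliptic tail factors survive integration (forcing the root to carry $\lambda_{i-k}\lambda_{j-k}$), carry out the same count of $(\underline G,\underline H)$-structures, and then invoke Lemma \ref{lemma} to collapse the sum over $(a,b)$ (the paper's $(r,s)$) to $\tfrac{1}{k!}$. The only differences are notational and in the order of presentation.
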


\begin{proof}
From Theorem \ref{Pullback}, we know $$\int_{\overline{\mathcal{M}}^{\ps}_{g, n}} \lambda_{i}\lambda_{j}F(\psi_1, \dots, \psi_n) = \int_{\overline{\mathcal{M}}_{g, n}} \hat{\lambda}_{i}\hat{\lambda}_{j}F(\psi_1, \dots, \psi_n),$$ where 
$$
\hat{\lambda}_{i} = \sum_{r=0}^{i}\frac{1}{r!}\mathcal{G}_*^r(p_0^*(\lambda_{i-r})) \text{ and } \hat{\lambda}_{j} = \sum_{s=0}^{j}\frac{1}{s!}\mathcal{G}_*^s(p_0^*(\lambda_{j-s})).
$$ 

Consider the product
\begin{equation}\label{eq:prodsums}
\hat{\lambda}_{i}\hat{\lambda}_{j} = \left(\sum_{r=0}^{i}\frac{1}{r!}\mathcal{G}_*^r(p_0^*(\lambda_{i-r}))\right)\left(\sum_{s=0}^{j}\frac{1}{s!}\mathcal{G}_*^s(p_0^*(\lambda_{j-s}))\right).
\end{equation}
The result of (\ref{eq:prodsums})  contains many terms supported on strata parameterizing curves with $k$ elliptic tails, where $\max\{r, s\} \leq k \leq \min\{r + s, g\}$. Such terms  occur in the expansion of the term indexed by $(r,s)$ if the integer tuple $(r, s)$ satisfies the following conditions: 
\begin{align}
\label{eq:range}
0 \leq r \leq k, \ \ \ \ \
0 \leq s \leq k, \ \ \ \ \
r +s \geq k,\end{align}
i.e., it belongs to the indexing set $T$ from Lemma \ref{lemma}.
For each pair $(r, s)$ in $T$, we describe the coefficient of the $k$-tails stratum in the following product:
\begin{equation}\label{eq:rs}
    \frac{1}{r!}\mathcal{G}_*^r(p_0^*(\lambda_{i-r}))\cdot\frac{1}{s!}\mathcal{G}_*^s(p_0^*(\lambda_{j-s})).
\end{equation}

Using notation from \cite{Yang}, let $G$ and $H$ be the two decorated graphs representing $\mathcal{G}_*^r(p_0^*(\lambda_{i-r}))$ and $\mathcal{G}_*^s(p_0^*(\lambda_{i-s}))$, respectively, and let $A$ be the unique isomorphism class of decorated graphs in the resulting product supported on a stratum parameterizing curves with $k$ elliptic tails. Our goal is to identify its decorations and numerical coefficient. Of the $k$ edges of $A$, there are $k-s$ edges from only the $G$-structure, $k-r$ edges from only the $H$-structure, and $N := r+s-k$ edges from both the $G$ and $H$-structures. We call these three types of edges $G$-edges, $H$-edges and $(G,H)$-edges, respectively. All the edges, together with their adjacent vertices  have associated decorations. We break the computation in three parts.

\noindent\textsc{Decorations of $(G,H)$-edges.} The decoration for each of these edges is $(-\psi_{\bullet}-\psi_{\star})$, and with there being $N := r+s-k$ such edges, the contribution is 
\begin{equation}
\label{eq:tailco}
\mathcal{G}_*^k\left(\prod_{t=1}^{N}(-\psi_{\bullet_t} - \psi_{\star_t})\right).  
\end{equation}

We observe that when integrating over $\overline{\mathcal{M}}_{g, n}$, any non-vanishing term must have a class $-\psi_{\bullet_t}$ on each flag adjacent to the  genus one tail-vertex, thus the only non-vanishing contribution to the Hodge integral from \eqref{eq:tailco} is

\begin{equation}
(-1)^{N}\mathcal{G}_*^k\left(\prod_{t=1}^{N}\psi_{\bullet_t}\right).
\end{equation}

\noindent\textsc{Decorations of $G$-edges or $H$-edges.} By how the Hodge bundle splits along boundary  strata, the decorations of the tail vertices of these edges are either $1$ or $\lambda_1$. After integrating over $\overline{\mathcal{M}}_{g, n}$, all terms that have edges with a coefficient of $1$ associated to any of the outer vertices vanish by dimension reasons, so we can disregard them here. 
It follows that for the contributing terms, on the root vertex there is the following coefficient:
$$
\lambda_{(i-r)-(k-r)}\lambda_{(j-s)-(k-s)} = \lambda_{i-k}\lambda_{j-k}.
$$ 
Thus the contribution from these edges is
\begin{equation}
    \label{eq:onlyGonlyH}
\mathcal{G}_*^k\left(p_0^*(\lambda_{i-k}\lambda_{j-k})\prod_{t=N+1}^kp_t^\ast(\lambda_1)\right).
\end{equation}

\noindent\textsc{Numerical coefficient.}
There are many different but isomorphic $(G, H)$-structures on the graph $\underline{A}$. One possible way to organize the count is the following.
We first count the possible ways to obtain the $(G, H)$-edges: one must choose $N$ of the $k$ edges of $A$, then  $N$ of the $r$ and $s$ edges of $G$ and $H$, respectively; finally there are $N!$ ways to map the $N$ chosen edges of both $G$  and $H$ to the chosen edges of $A$. This gives us the following  count:
$$
\binom{k}{N}\binom{r}{N}\binom{s}{N}N!N!(-1)^{N},
$$
which simplifies to
$$
\frac{k!r!s!(-1)^{N}}{N!(k-N)!(r-N)!(s-N)!}.
$$
Having dealt with the $(G, H)$-edges, now we have to choose $r-N$ of the $k-N$ remaining edges of $A$ to create subsets of size $r-N$ and $s-N$; then there are $(r-N)!$ and $(s-N)!$ bijections of the chosen subsets of $A$ into the chosen subsets of $G$ and $H$, respectively. This gives us the following count:
$$
\binom{k-N}{r-N}(r-N)!(s-N)!,
$$
which simplifies to 
$
(k-N)!.
$
Multiplying the two, we obtain the total number of $(G,H)$-structures on $\underline{A}$.
The numerical coefficient for $\underline{A}$ in the multiplication \eqref{eq:rs} is given by
\begin{equation}
   \frac{1}{r!}\cdot \frac{1}{s!}\cdot \frac{1}{k!}\cdot\frac{k!r!s!(-1)^{N}(k-N)!}{N!(k-N)!(r-N)!(s-N)!} = \frac{(-1)^{N}}{N!(r-N)!(s-N)!},
\end{equation}
where the factor of $\frac{1}{k!}$ is from $|\text{Aut}(A)| = k!$, following the algorithm for multiplication of graphs $G$ and $H$ shown in \cite{Yang}.

\noindent\textsc{Total contribution.}
Combining all types of decorations, and the numerical coefficient, the (isomorphism class of the) decorated graph  $A$ is given by 
\begin{equation}
  \label{eq:quasithere}  
\frac{(-1)^{N}}{N!(r-N)!(s-N)!}\mathcal{G}_*^k\left(p_0^*(\lambda_{i-k}\lambda_{j-k})\prod_{t=1}^{N}\psi_{\bullet_t}\prod_{t=N+1}^kp_t^\ast(\lambda_1)\right).
\end{equation}

Multiplying \eqref{eq:quasithere} by $F(\psi_1, \ldots, \psi_n)$ and integrating  over $\overline{\mathcal{M}}_{g, n}$ yields
\begin{align}\label{eq:co}
 \nonumber   &\frac{(-1)^N}{N!(r-N)!(s-N)!}\cdot\int_{\overline{\mathcal{M}}_{g, n}}\mathcal{G}_*^k\left(p_0^*(\lambda_{i-k}\lambda_{j-k})\prod_{t=1}^{N}\psi_{\bullet_t}\prod_{t=N+1}^k(p_t^\ast(\lambda_1))\right)F(\psi_1, \ldots, \psi_n)\\
 \nonumber   = &\frac{(-1)^N}{N!(r-N)!(s-N)!}\cdot\left(\int_{\overline{\mathcal{M}}_{g-k, n+k}} \lambda_{i-k}\lambda_{j-k}F(\psi_1, \ldots, \psi_n)\right)\left(\int_{\overline{\mathcal{M}}_{1, 1}}\psi_{1}\right)^{N}\left(\int_{\overline{\mathcal{M}}_{1, 1}}\lambda_1\right)^{k-N}\\
 \nonumber   = &\frac{\left(-1\right)^{r+s-k}}{24^k(r+s-k)!(k-s)!(k-r)!}\int_{\overline{\mathcal{M}}_{g-k, n+k}}\lambda_{i-k}\lambda_{j-k} F(\psi_1, \ldots, \psi_n).\\
  & 
\end{align}

The contribution to the Hodge integral by the $k$-tail stratum, is obtained by  adding  \eqref{eq:co} for the range of pairs $(r,s)$ from \eqref{eq:range}:

$$
\sum_{(r, s) \in T} \frac{\left(-1\right)^{r+s-k}}{24^k(r+s-k)!(k-s)!(k-r)!}\int_{\overline{\mathcal{M}}_{g-k, n+k}}\lambda_{i-k}\lambda_{j-k}F(\psi_1, \dots, \psi_n),
$$
where $T$ is the set of integer tuples $(r, s)$ with $0 \leq r \leq k, 0 \leq s \leq k,$ and $r + s \geq k$. Using Lemma \ref{lemma}, this sum simplifies to 
$$
\frac{1}{24^kk!}\int_{\overline{\mathcal{M}}_{g-k, n+k}}\lambda_{i-k}\lambda_{j-k}F(\psi_1, \dots, \psi_n).
$$
Thus we have $$\int_{\overline{\mathcal{M}}_{g, n}} \hat{\lambda}_{i}\hat{\lambda}_{j}F(\psi_1, \dots, \psi_n) = \sum_{k=0}^{\min\{i, j\}}\frac{1}{24^kk!}\int_{\overline{\mathcal{M}}_{g-k, n+k}}\lambda_{i-k}\lambda_{j-k}F(\psi_1, \dots, \psi_n),$$ where the upper bound on the sum of $\min\{i, j\}$ is due to $\lambda_{i-k} = 0$ and $\lambda_{j-k} = 0$ when $i - k < 0$ and $j - k < 0,$ respectively. 
\end{proof}

\begin{sloppypar}
We now present a comparison between all pseudostable and stable quadratic Hodge integrals using generating functions. While we used $\lambda_i$ and $\lambda_j$ in Theorem \ref{QHI}, we instead use $\lambda_{g-i}$ and $\lambda_{g-j}$ in the generating functions for convenience.
\end{sloppypar}

\begin{theorem}\label{Generating} Let $x,y,z,t_m$, for $m$ any positive integer, be formal variables. Define
$$ 
\mathcal{F}^{ps}(x, y, z, \{t_m\}):=\sum_{i, j, g, n = 0}^\infty x^iy^jz^g \int_{\overline{\mathcal{M}}^{\ps}_{g, n}} \frac{\lambda_{g-i}\lambda_{g-j}}{\prod_{m=1}^n (1 - t_m\psi_m)},
$$
where the denominator should be intended as a shorthand for the corresponding expansion as a product of geometric series. Define $\mathcal{F}(x, y, z, \{t_m\})$ similarly for stable Hodge integrals. In both cases we adopt the convention that indices $(g,n)$ for which we have not defined a corresponding moduli space correspond to zero summands. Then: 
$$
\mathcal{F}^{ps}(x, y, z, \{t_m\}) = e^{z/24}\mathcal{F}(x, y, z, \{t_m\}).
$$
\end{theorem}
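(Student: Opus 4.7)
The plan is to apply Theorem~\ref{QHI} termwise to the integrand in the defining series of $\mathcal{F}^{ps}$, and then factor out the exponential $e^{z/24}$ by rearranging the resulting multi-sum. Taking $F=\prod_{m=1}^n(1-t_m\psi_m)^{-1}$ interpreted as a product of geometric series, Theorem~\ref{QHI} gives
\[
\int_{\overline{\mathcal{M}}^{\ps}_{g,n}}\lambda_{g-i}\lambda_{g-j}\,F \;=\; \sum_{k=0}^{\min\{g-i,\,g-j\}}\frac{1}{24^k k!}\int_{\overline{\mathcal{M}}_{g-k,n+k}}\lambda_{g-i-k}\lambda_{g-j-k}\,F,
\]
and plugging this in turns the defining quadruple sum into a quintuple sum in $(i,j,g,n,k)$.

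Next I would swap the order of summation so that $k$ sits outermost, and substitute the bookkeeping variable $g'=g-k$; this has the effect of splitting $z^g=z^{g'}z^k$ and identifying $\lambda_{g-i-k}=\lambda_{g'-i}$. Factoring yields
\[
\mathcal{F}^{ps}=\sum_{k\ge 0}\frac{z^k}{24^k k!}\,\mathcal{S}_k,\qquad \mathcal{S}_k:=\sum_{i,j,g',n\ge 0}x^iy^jz^{g'}\int_{\overline{\mathcal{M}}_{g',n+k}}\lambda_{g'-i}\lambda_{g'-j}\,F.
\]
Since $\sum_{k\ge 0}z^k/(24^k k!)=e^{z/24}$, the theorem reduces to showing $\mathcal{S}_k=\mathcal{F}(x,y,z,\{t_m\})$ for every $k\ge 0$.

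The main obstacle is this last identification. For $k=0$ it is built into the definition of $\mathcal{F}$. For $k\ge 1$, the integrals in $\mathcal{S}_k$ live on $\overline{\mathcal{M}}_{g',n+k}$ and involve $\psi$-insertions only on the first $n$ of the $n+k$ marked points, whereas $\mathcal{F}$ uses $\overline{\mathcal{M}}_{g',n}$ with $\psi$-insertions on all $n$ marked points. The natural tool to reconcile them is the string equation, applied $k$ times to absorb the extra marked points that carry no $\psi$-class; one must then verify that the combinatorial factors it produces collapse against the geometric-series structure of $\prod_{m=1}^n(1-t_m\psi_m)^{-1}$ so as to recover precisely $\mathcal{F}$. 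Making this symbolic identity precise, and checking it coefficient-by-coefficient against the dimensional constraint on the relevant integrals, is the technical heart of the proof.
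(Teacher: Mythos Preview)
Your initial steps are correct: applying Theorem~\ref{QHI} and reindexing with $g'=g-k$ does yield
\[
\mathcal{F}^{ps}=\sum_{k\ge 0}\frac{z^k}{24^k k!}\,\mathcal{S}_k.
\]
The gap is in the next move. You assert that the theorem reduces to proving $\mathcal{S}_k=\mathcal{F}$ for every $k$, and propose to do this via $k$ applications of the string equation. But the identity $\mathcal{S}_k=\mathcal{F}$ is simply false for $k\ge 1$, so no argument can establish it. For instance, take $i=j=g'=0$ and compare the coefficient of $t_4$: in $\mathcal{F}$ the sole contribution is $\int_{\overline{\mathcal{M}}_{0,4}}\psi_4=1$, coming from $N=4$; in $\mathcal{S}_1$ the summand with four marked points is the one indexed by $n=3$, whose integrand involves only $t_1,t_2,t_3$ and hence contributes nothing to $t_4$, while all summands with $n\ge 4$ vanish by dimension. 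More structurally, applying the string equation $k$ times to the $n$-th summand of $\mathcal{S}_k$ produces the factor $(t_1+\cdots+t_n)^k$ rather than $1$, and there is no mechanism in the geometric-series packaging that removes it.

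The paper's proof bypasses this obstacle by never attempting to identify $\mathcal{S}_k$ with $\mathcal{F}$. Instead it compares the two sides coefficient-by-coefficient in the monomials $x^iy^jz^gt^{\underline{\mu}}$. On the $e^{z/24}\mathcal{F}$ side, after peeling off $z^k/(24^kk!)$ from the exponential, there is still an outer sum over the number of marked points $N$ in the definition of $\mathcal{F}$; the dimensional constraint on $\overline{\mathcal{M}}_{g-k,N}$ then forces $N=n+k$. Thus the extra $k$ points produced by Theorem~\ref{QHI} are absorbed by the sum over $n$ already present in $\mathcal{F}$, not by the string equation. Once $N=n+k$ is pinned down, both sides reduce to the same integral and the equality is exactly the statement of Theorem~\ref{QHI}. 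Your closing remark about ``checking coefficient-by-coefficient against the dimensional constraint'' is in fact the whole argument; the detour through $\mathcal{S}_k=\mathcal{F}$ and the string equation should be dropped.
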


\begin{proof}
Let $\underline{\mu} = (\mu_1, \ldots, \mu_n)$ be a vector of non-negative integers and denote by $t^{\underline{\mu}}$ the monomial $\prod_{m = 1}^n t_m^{\mu_m}$.
Given  $i, j, g, \underline{\mu}$, the coefficient of the monomial $x^iy^jz^g t^{\underline{\mu}}$ in $\mathcal{F}^{ps}(x, y, z, \{t_m\})$ is equal to
\begin{equation}\label{Eq:firstside}
\int_{\overline{\mathcal{M}}^{\ps}_{g, n}} \lambda_{g-i}\lambda_{g-j}\prod_{m = 1}^n \psi_m^{\mu_m},
\end{equation}
and is non-zero only if $|\underline{\mu}| = g + i + j + n - 3$. Note that the same condition holds in the stable case. 
We now compute the coefficient of the same monomial in the 
expression $e^{z/24}\mathcal{F}(x, y, z, \{t_m\})$. We first expand the product to obtain:
    \begin{align*}
        e^{z/24}\mathcal{F}(x, y, z, \{t_m\})
        = &\left(\sum_{g = 0}^\infty\frac{1}{24^gg!}z^g\right)\left(\sum_{i, j, g, n = 0}^\infty x^iy^jz^g \int_{\overline{\mathcal{M}}_{g, n}} \frac{\lambda_{g-i}\lambda_{g-j}}{\prod_{m=1}^n (1 - t_m\psi_m)}\right)\\
        = &\sum_{i, j, g, n=0}^\infty x^iy^jz^g\left(\sum_{k=0}^g \frac{1}{24^kk!}\int_{\overline{\mathcal{M}}_{g-k, n}} \frac{\lambda_{(g-k)-i}\lambda_{(g-k)-j}}{\prod_{m=1}^n (1 - t_m\psi_m)}\right).
    \end{align*}

The coefficient of the monomial $x^iy^jz^g t^{\underline{\mu}}$ is 
\begin{equation}\label{eq:otherside}
\sum_{k=0}^{\min\{g-i, g-j\}} \frac{1}{24^kk!}\int_{\overline{\mathcal{M}}_{g-k, n+k}} {\lambda_{(g-i)-k}\lambda_{(g-j)-k}}\prod_{m = 1}^n \psi_m^{\mu_m}
\end{equation}
    
    The reason for $n+k$ instead of $n$ is because that is the only number of marked points which  returns a nonzero integral for dimension reasons. Further, notice that while the upper bound of $k$ in the inner sum is $g$, any value of $k$ greater than $\min\{g-i, g-j\}$ will make $\lambda_{(g-k)-i}$ or $\lambda_{(g-k)-j}$ equal to zero, so we can instead use $\min\{g-i, g-j\}$ for the upper bound.
   
   We now observe that the equality of \eqref{Eq:firstside} and \eqref{eq:otherside} is precisely the statement of  Theorem \ref{QHI}, thus concluding the proof.
   
\end{proof}

\subsection{Applications} We conclude the paper with two applications: first we observe the how simple and explicit the equality of generating function from Theorem \ref{Generating} is when restricting attention to Hodge integrals with a $\lambda_g^2$ term and a single $\psi$ class. Next we give a closed formula the evaluation of arbitrary $\lambda_g\lambda_{g-1}$ pseudostable Hodge integrals.

\begin{example}
    We  demonstrate Theorem \ref{Generating} for $x = 0, y = 0,$ and $\{t_m\} = (t, 0, 0, 0, \dots)$. That is, we  explicitly observe \begin{equation}\label{eq:generating}
        \sum_{g, n = 0}^\infty z^g \int_{\overline{\mathcal{M}}^{\ps}_{g, n}} \frac{\lambda_{g}^2}{1 - t\psi_1} = e^{z/24}\left(\sum_{g, n = 0}^\infty z^g \int_{\overline{\mathcal{M}}_{g, n}} \frac{\lambda_{g}^2}{1 - t\psi_1}\right).
    \end{equation}

    Starting on the left side of (\ref{eq:generating}), the only term of the integrand which will give a nonzero result is $\lambda_g^2(t\psi_1)^{g+n-3},$ so 

    \begin{align}\label{eq:genpseudo}
        \sum_{g, n = 0}^\infty z^g \int_{\overline{\mathcal{M}}^{\ps}_{g, n}} \frac{\lambda_{g}^2}{1 - t\psi_1} = \sum_{g, n = 0}^\infty z^g t^{g+n-3}\int_{\overline{\mathcal{M}}^{\ps}_{g, n}} \lambda_g^2\psi_1^{g+n-3}.
    \end{align}

    Looking at the right side of (\ref{eq:generating}), note that $\lambda_g^2 = 0$ except when $g = 0$, in which case $\lambda_0^2 = 1$. The only term in the expansion of $\frac{1}{1-t\psi_1}$ which  produces a nonzero result is $(t\psi_1)^{g+n-3}$, implying
    \begin{align}\label{eq:genstring}
        e^{z/24}\left(\sum_{g, n = 0}^\infty z^g \int_{\overline{\mathcal{M}}_{g, n}} \frac{\lambda_{g}^2}{1 - t\psi_1}\right) &= e^{z/24}\left(\sum_{n = 3}^\infty t^{n-3}\int_{\overline{\mathcal{M}}_{0, n}} \psi_1^{n-3} \right)
        = e^{z/24}\sum_{n = 3}^\infty t^{n-3}.
    \end{align}
    The last equality of (\ref{eq:genstring}) used the string equation \cite{witten1990two} to evaluate each of the integrals to $1$.
By Theorem \ref{Generating}, 
\begin{equation} \label{eq:final}\sum_{g, n = 0}^\infty
    \left(\int_{\overline{\mathcal{M}}^{\ps}_{g, n}} \lambda_g^2\psi_1^{g+n-3}\right)z^g t^{g+n-3} = \frac{e^{z/24}}{1-t}.\end{equation}
    The coefficient of $z^gt^{g+n-3}$ in \eqref{eq:final} is $\frac{1}{24^g}\cdot\frac{1}{g!}$,  which aligns with what we found in Example \ref{ex:MumfordHodge1}, where we computed these integrals explicitly.
\end{example}

\begin{proposition}
    For all pseudostable indices $(g, n)$, 
    $$
    \int_{\overline{\mathcal{M}}^{\ps}_{g, n}} \psi_1^{d_1}\cdots \psi_n^{d_n}\lambda_g\lambda_{g-1} = \sum_{k=0}^{g-1}\frac{1}{24^kk!}\cdot\frac{(2(g-k)-3+(n+k))!|B_{2(g-k)}|}{2^{2(g-k)-1}(2(g-k))!\prod_{j=1}^{n+k}(2d_j-1)!!},
    $$
    where $B_{2(g-k)}$ represents the $2(g-k)$-th Bernoulli number.
\end{proposition}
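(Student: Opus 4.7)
The plan is a direct two-step reduction. First, apply Theorem \ref{QHI} with $i = g$, $j = g-1$, and $F(\psi_1, \ldots, \psi_n) = \prod_{m=1}^n \psi_m^{d_m}$. Since $\min\{g, g-1\} = g-1$, the theorem immediately expresses the pseudostable integral as a finite sum of stable ones:
$$
\int_{\overline{\mathcal{M}}^{\ps}_{g, n}} \prod_{m=1}^n \psi_m^{d_m}\,\lambda_g\lambda_{g-1} \;=\; \sum_{k=0}^{g-1}\frac{1}{24^k k!}\int_{\overline{\mathcal{M}}_{g-k,\, n+k}} \prod_{m=1}^n \psi_m^{d_m}\,\lambda_{g-k}\lambda_{(g-k)-1}.
$$
Before proceeding, I would check dimensional consistency to confirm that both sides impose the same constraint $\sum_{m=1}^n d_m = g - 2 + n$; indeed, on $\overline{\mathcal{M}}_{g-k,\,n+k}$ the integrand has codimension $2(g-k)-1 + \sum d_m$, matching $\dim\overline{\mathcal{M}}_{g-k,\,n+k} = 3(g-k)-3+(n+k)$ precisely under this relation (which equals the Faber--Pandharipande constraint $G-2+N$ below, with $G=g-k$, $N=n+k$).

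The second step is to invoke the closed-form evaluation of $\lambda_G \lambda_{G-1}$ Hodge integrals due to Faber and Pandharipande: for all $G \geq 1$, $N \geq 1$, and nonnegative integers $d_1, \ldots, d_N$ satisfying $\sum_{m=1}^N d_m = G-2+N$, one has
$$
\int_{\overline{\mathcal{M}}_{G, N}}\prod_{m=1}^N \psi_m^{d_m}\,\lambda_G\lambda_{G-1} \;=\; \frac{(2G - 3 + N)!\,|B_{2G}|}{2^{2G-1}(2G)!\,\prod_{m=1}^N (2d_m-1)!!}.
$$
Applying this with $G = g-k$ and $N = n+k$, and adopting the convention that the extra $k$ marked points carry no psi class (so $d_{n+1} = \cdots = d_{n+k} = 0$ and $(2d_j-1)!! = (-1)!! = 1$ for $j > n$), the product over $m=1,\ldots,n$ extends harmlessly to one over $m=1,\ldots,n+k$.

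Substituting and observing that $2(g-k) - 3 + (n+k)$ is exactly the numerator exponent appearing in the target formula finishes the proof. There is no real obstacle: the proposition is a cosmetic repackaging of Theorem \ref{QHI} together with the Faber--Pandharipande evaluation. The only subtlety worth mentioning in the write-up is the indexing convention that lets us write $\prod_{j=1}^{n+k}(2d_j-1)!!$ instead of $\prod_{j=1}^{n}(2d_j-1)!!$ on the right-hand side.
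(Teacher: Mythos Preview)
Your proposal is correct and follows essentially the same two-step approach as the paper: apply Theorem \ref{QHI} with $i=g$, $j=g-1$, then evaluate each resulting stable integral via the Faber--Pandharipande $\lambda_g\lambda_{g-1}$ formula. Your added remarks on dimensional consistency and on the convention $d_{n+1}=\cdots=d_{n+k}=0$ (so that $(2d_j-1)!!=1$ for $j>n$) make explicit a point the paper leaves implicit when passing from $\prod_{j=1}^n$ to $\prod_{j=1}^{n+k}$.
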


\begin{proof}
    Applying Theorem \ref{QHI} to the quadratic pseudostable Hodge integral gives $$\int_{\overline{\mathcal{M}}^{\ps}_{g, n}} \psi_1^{d_1}\cdots \psi_n^{d_n}\lambda_g\lambda_{g-1} = \sum_{k=0}^{g-1}\frac{1}{24^kk!}\int_{\overline{\mathcal{M}}_{g-k, n+k}} \psi_1^{d_1}\cdots\psi_n^{d_n}\lambda_{g-k}\lambda_{(g-k)-1}.$$ Each of the integrals in the sum can be evaluated with the Faber intersection number conjecture (conjectured in \cite{FINC}, proven in \cite{FINCpf}):
    $$
    \int_{\overline{\mathcal{M}}_{g, n}} \psi_1^{d_1}\cdots\psi_n^{d_n}\lambda_{g}\lambda_{g-1} = \frac{(2g - 3 + n)!|B_{2g}|}{2^{2g-1}(2g)!\prod_{j=1}^n(2d_j-1)!!},
    $$
    where $B_{2g}$ is the $2g$-th Bernoulli number. Replace $g$ with $g-k$, and this yields the desired result.
\end{proof}
\appendix

\bibliographystyle{alpha}

\bibliography{references}

\end{document}